\theoremstyle{plain}
\newtheorem{theorem}{Theorem}[section]
\newtheorem{lemma}[theorem]{Lemma}
\newtheorem{corollary}[theorem]{Corollary}
\newtheorem{fact}[theorem]{Fact}
\theoremstyle{definition}
\newtheorem{definition}[theorem]{Definition}
\newtheorem{exampletemp}[theorem]{Example}
\newtheorem{remark}[theorem]{Remark}
\newtheorem*{acknowledgements}{Acknowledgements}
\newenvironment{example}{\begin{exampletemp}}{\hfill\qed\end{exampletemp}}
\begin{document}

\title[Finite Topological Graph Algebras]{Finiteness Properties of Certain Topological Graph Algebras}
\author{Christopher P. Schafhauser}
\address{Department of Mathematics \\ University of Nebraska -- Lincoln \\ 203 Avery Hall \\ Lincoln, NE 68588-0130}
\email{cschafhauser2@math.unl.edu}
\subjclass[2010]{Primary: 46L05}
\keywords{Graph Algebras, Topological Graphs, Cuntz-Pimsner Algebras, AF-Embeddable, Quasidiagonal, Stably Finite}
\date{\today}

\begin{abstract}
  Let $E = (E^0, E^1, r, s)$ be a topological graph with no sinks such that $E^0$ and $E^1$ are compact.  We show that when $C^*(E)$ is finite, there is a natural isomorphism $C^*(E) \cong C(E^\infty) \rtimes \mathbb{Z}$, where $E^\infty$ is the infinite path space of $E$ and the action is given by the backwards shift on $E^\infty$.  Combining this with a result of Pimsner, we show the properties of being AF-embeddable, quasidiagonal, stably finite, and finite are equivalent for $C^*(E)$ and can be characterized by a natural ``combinatorial'' condition on $E$.
\end{abstract}

\maketitle

\section{Introduction}\label{sec:Introduction}

A topological graph $E = (E^0, E^1, r, s)$ is a directed graph such that the vertex set $E^0$ and the edge set $E^1$ are both locally compact Hausdorff spaces and the range and source maps $r, s: E^1 \rightarrow E^0$ satisfy appropriate continuity conditions (see Section \ref{sec:TopologicalGraphs}).  A graph can be viewed as a generalization of a dynamical system where the graph $E$ is thought of as a partially defined, multi-valued, continuous map $E^0 \rightarrow E^0$ given by $v \mapsto r(s^{-1}(v)) \subseteq E^0$ for every $v \in s(E^0) \subseteq E^0$.

In \cite{KatsuraTGA1, KatsuraTGA2, KatsuraTGA3, KatsuraTGA4}, Katsura constructs a $C^*$-algebra $C^*(E)$ from a topological graph $E$ and gives a very detailed analysis of these $C^*$-algebras.  This construction generalizes both discrete graph algebras and homeomorphism $C^*$-algebras.  As with discrete graphs, the structure of the graph algebra $C^*(E)$ is closely related to the structure of the underlying graph $E$.  For instance, the ideal structure, $K$-theory, simplicity, and pure infiniteness of $C^*(E)$ can all be described in terms of the graph $E$.  There are also generalizations of the Cuntz-Krieger and Gauge Invariant Uniqueness Theorems for topological graphs.

Although the class of $C^*$-algebras which are defined by discrete graphs is fairly limited, it seems that many interesting $C^*$-algebras (especially those in the classification program) appear as topological graph algebras.  For instance, every crossed product $C_0(X) \rtimes \mathbb{Z}$, every UCT Kirchberg algebra, and every AF-algebra appear as topological graph algebras.  In fact, there does not seem to be any known nuclear UCT $C^*$-algebras which do not arise as a topological graph algebras, although it seems unlikely that every nuclear UCT $C^*$-algebra will have this form.  For instance, in view of Theorem \ref{thm:TopGraphsAFEmbedding} below, it seems plausible that a finite $C^*$-algebra that is not stably finite will not be a topological graph algebra.

We are interested in the finiteness properties of $C^*(E)$. In particular, when is $C^*(E)$ AF-embeddable, quasidiagonal, stable finite, or finite?  A $C^*$-algebra is called \emph{quasidiagonal} if there is a net of completely positive contractive maps $\varphi_n: A \rightarrow F_n$ where each $F_n$ is a finite dimensional $C^*$-algebra and the $\varphi_n$ are asymptotically isometric and multiplicative; i.e.\ for every $a, b \in A$, we have
\[ \| \varphi_n(ab) - \varphi_n(a) \varphi_n(b) \| \rightarrow 0 \qquad \text{and} \qquad \|\varphi(a_n)\| \rightarrow \|a\|. \]
It follows from Arveson's Extension Theorem that every AF-algebra is quasidiagonal.  Subalgebras of quasidiagonal $C^*$-algebras are certainly quasidiagonal and hence every AF-embeddable $C^*$-algebra is quasidiagonal.  Moreover, it is well known that quasidiagonal $C^*$-algebras are stably finite (\cite[Proposition 7.1.15]{BrownOzawa}).

The converses are known to be false.  In particular, by a result of Choi (\cite[Theorem 7]{Choi}), $C^*(\mathbb{F}_2)$ is quasidiagonal (in fact the $\varphi_n$ may be chosen to be multiplicative) but $C^*(\mathbb{F}_2)$ is not AF-embeddable since it is not exact (\cite{Wassermann}).  Similarly, if $G$ is a non-amenable group, then $C^*_r(G)$ is not quasidiagonal by a result of Rosenberg (\cite{Rosenberg}), but it is well known that $C^*_r(G)$ is stably finite since it has a faithful trace.  Blackadar and Kirchberg conjectured in \cite{BlackadarKirchberg} that the converses may be true with some amenability assumptions.  In particular, they conjecture AF-embeddability, quasidiagonality, and stable finiteness are equivalent for nuclear $C^*$-algebras.  The main result of \cite{SchafhauserGA} verifies this conjecture for graph $C^*$-algebras.  Also, the following result from Pimsner verifies the conjecture for certain crossed products.

\begin{theorem}[Theorem 9 in \cite{PimsnerAFE}]\label{thm:Pimsner}
Suppose $X$ is a compact metric space and $f$ is a homeomorphism of $X$.  Then the following are equivalent:
\begin{enumerate}
  \item $C(X) \rtimes_f \mathbb{Z}$ is AF-embeddable;
  \item $C(X) \rtimes_f \mathbb{Z}$ is quasidiagonal;
  \item $C(X) \rtimes_f \mathbb{Z}$ is stably finite;
  \item $C(X) \rtimes_f \mathbb{Z}$ is finite;
  \item Every point in $X$ is pseudoperiodic for $f$:  given $x_1 \in X$, there are points $x_2, \ldots, x_n \in X$ such that $d(f(x_i), x_{i+1}) < \varepsilon$ for all $i = 1, \ldots, n$, where the subscripts are taken modulo $n$.
\end{enumerate}
\end{theorem}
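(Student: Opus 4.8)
The plan is to prove the cycle $(1)\Rightarrow(2)\Rightarrow(3)\Rightarrow(4)\Rightarrow(5)\Rightarrow(1)$, where only the last two arrows involve the dynamics. Write $u$ for the canonical unitary implementing the action, normalised so that $ugu^*=g\circ f^{-1}$ for $g\in C(X)$, and recall the faithful conditional expectation $E\colon C(X)\rtimes_f\mathbb Z\to C(X)$ with $E(gu^n)=\delta_{n,0}\,g$. The implications $(1)\Rightarrow(2)\Rightarrow(3)\Rightarrow(4)$ use no dynamics and are precisely the general facts recalled in the introduction: AF algebras are quasidiagonal, quasidiagonality passes to $C^*$-subalgebras, quasidiagonal $C^*$-algebras are stably finite by \cite[Proposition 7.1.15]{BrownOzawa}, and stably finite algebras are finite.

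For $(4)\Rightarrow(5)$ I would argue contrapositively, producing infiniteness from a point that fails to be pseudoperiodic. Suppose $x_0\in X$ and $\varepsilon>0$ are such that no $\varepsilon$-pseudo-orbit runs from $x_0$ back to $x_0$, and put
\[ U=\{\,y\in X : \text{some }\varepsilon\text{-pseudo-orbit }x_0=y_0,y_1,\dots,y_n=y\ (n\ge 0)\text{ exists}\,\}. \]
A routine verification shows that $U$ is open, that $f(U)\subseteq U$, and that the compact set $f(\overline U)=\overline{f(U)}$ is contained in $U$; moreover $x_0\in U\setminus\overline{f(U)}$, since a point of $f(U)$ lying within $\varepsilon$ of $x_0$ would close a pseudo-orbit at $x_0$. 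Thus $U$ is a nonempty open set whose image is compactly and properly contained in it, and the unitary $u$ compresses $\overline U$ onto the strictly smaller set $f(\overline U)$. Interpolating continuous functions across the gap between $\overline{f(U)}$ and $x_0$ turns the formal partial isometry $u\chi_{\overline U}$ into an honest one whose initial projection properly dominates its range projection, and a Berg mapping-cylinder argument then absorbs the resulting defect to yield a non-unitary isometry in $C(X)\rtimes_f\mathbb Z$. Hence the unit is infinite and $(4)$ fails. Replacing characteristic functions by continuous ones and carrying out the completion is the only delicate point in this direction, and the gap $x_0\in U\setminus\overline{f(U)}$ is exactly what makes the completion possible.

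The substance of the theorem is $(5)\Rightarrow(1)$: manufacturing an AF embedding out of pseudoperiodicity. My approach is to model the shift, at each scale $\varepsilon>0$, by finitely many $\varepsilon$-pseudo-orbits that close up. Using compactness together with the hypothesis that every point is pseudoperiodic, I would choose finitely many closed $\varepsilon$-pseudo-cycles through points $y_1,\dots,y_N$ whose associated bump functions approximately exhaust a partition of unity on $X$, and on each cycle of length $\ell$ represent $u$ by the cyclic permutation in $M_\ell$, the matrix units standing for ``$u$ times a bump at $y_i$.'' Because the pseudo-orbits return to their starting points, the element modelling $u$ is a genuine permutation unitary in the finite-dimensional algebra $F_\varepsilon$ and carries no defect; this is the precise place where pseudoperiodicity is used and indicates why it is both necessary and sufficient. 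These models furnish completely positive contractive maps $C(X)\rtimes_f\mathbb Z\to F_\varepsilon$ that become multiplicative and isometric as $\varepsilon\to0$, which already recovers quasidiagonality.

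The main obstacle is to upgrade this family of approximations from a certificate of quasidiagonality to an actual AF embedding. For this I would fix a sequence $\varepsilon_1>\varepsilon_2>\cdots\to0$ and refine each closed pseudo-cycle at scale $\varepsilon_k$ into closed pseudo-cycles at scale $\varepsilon_{k+1}$, so that the corresponding refinement of the cyclic-permutation models realises an approximate, and after a small unitary perturbation genuine, embedding $F_{\varepsilon_k}\hookrightarrow F_{\varepsilon_{k+1}}$; the inductive limit of the perturbed chain is then an AF algebra into which $C(X)\rtimes_f\mathbb Z$ embeds. Verifying that the finite-dimensional models can be chosen compatibly across scales, and that the accumulated perturbations are summable so that the chain actually increases in the limit, is the delicate technical heart of the argument and the step I expect to be hardest. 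Once it is in place, the resulting inductive limit closes the cycle at $(1)$.
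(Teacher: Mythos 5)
First, a point of comparison: the paper does not prove this statement at all --- it is quoted as Theorem 9 of Pimsner's paper \cite{PimsnerAFE} and used as a black box, so the only internal hint of a proof is the remark in the introduction that Pimsner proves $(4)\Rightarrow(5)$ by representing $C(X)\rtimes_f\mathbb{Z}$ on the $\ell^2$-spaces of orbits and analyzing weighted bilateral shifts. Your handling of $(1)\Rightarrow(2)\Rightarrow(3)\Rightarrow(4)$ is fine, and your contrapositive argument for $(4)\Rightarrow(5)$ is a legitimately different (and workable) route, but it has one error and one unnecessary detour. With your definition ($n\geq 0$ allowed), $U$ is \emph{not} open in general: $U=\{x_0\}\cup W$, where $W$ is the set reachable in $n\geq 1$ steps, and $x_0$ need not be interior (take $X=[-1,1]$, $f(x)=x+\tfrac{1}{10}(1-x^2)$, $x_0=0$: every point of $W$ is bounded away from $0$). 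The correct objects are $W$, which is open and satisfies $f(\overline{W}\cup\{x_0\})\subseteq W$ and $x_0\notin W$. Granting that, no ``Berg mapping-cylinder argument'' is needed: choose $g\in C(X)$ with $0\leq g\leq 1$, $g=1$ on the compact set $f(\overline{W}\cup\{x_0\})$ and $\operatorname{supp} g\subseteq W$. Then $g\cdot\bigl((1-g^2)^{1/2}\circ f\bigr)=0$, so $T=ug+(1-g^2)^{1/2}$ is an isometry in $C(X)\rtimes_f\mathbb{Z}$, and in the orbit representation on $\ell^2(\{f^n(x_0)\}_{n\in\mathbb{Z}})$ (the orbit is aperiodic, else $x_0$ would be pseudoperiodic) one computes $T\delta_n=\delta_{n+1}$ for $n\geq 1$ and $T\delta_n=\delta_n$ for $n\leq 0$, so $T$ is not surjective, hence not unitary, and $(4)$ fails.

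The genuine gap is $(5)\Rightarrow(1)$, which is the substance of Pimsner's theorem and which your proposal does not prove. Two concrete problems. (i) The claimed c.p.c.\ maps onto the cyclic-permutation models are never constructed, and the natural formula $\sum_k g_k u^k\mapsto\sum_k \operatorname{diag}\bigl(g_k(x_i)\bigr)S^k$ is not obviously (and in general is not) completely positive: an $\varepsilon$-pseudo-cycle is not an invariant set, so this map is not the compression of any representation of the crossed product, and no covariance relation backs it up. The standard way to produce such maps is to take a long finite segment of a \emph{genuine} orbit and use pseudoperiodicity to close it up by a small rotation between orthonormal sets --- that is, Berg's technique, which is needed precisely here (not in $(4)\Rightarrow(5)$) and is the analytic content you have omitted. (ii) Even granting quasidiagonality, the upgrade to an AF-embedding --- choosing pseudo-cycles at scale $\varepsilon_{k+1}$ compatibly refining those at scale $\varepsilon_k$, perturbing approximate embeddings $F_{\varepsilon_k}\to F_{\varepsilon_{k+1}}$ into genuine ones, and summing the perturbations --- is exactly the inductive construction that occupies most of Pimsner's paper, and you explicitly defer it as ``the step I expect to be hardest.'' As written, the proposal is an accurate map of where the difficulties lie, together with a fixable sketch of the easy direction; it is not yet a proof.
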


There is a similar theorem of N. Brown for crossed products of AF-algebras by $\mathbb{Z}$ and there are partial results for many other crossed products (see \cite{BrownAFE}).  See \cite[Chapters 7 and 8]{BrownOzawa} for a survey of quasidiagonality and AF-embeddability.  Our goal in this paper is to prove a version of Theorem \ref{thm:Pimsner} for the $C^*$-algebra $C^*(E)$ generated by a compact topological graph $E$ with no sinks (Theorem \ref{thm:TopGraphsAFEmbedding}).  We show that if $C^*(E)$ is finite, then $E$ has no sources and every vertex in $E$ emits exactly one edge.  In this case, $C^*(E) \cong C(E^\infty) \rtimes \mathbb{Z}$, where $E^\infty$ is the space of all infinite paths in $E$ and the actions is given by the backward shift on $E^\infty$ (Theorems \ref{thm:Finiteness} and \ref{thm:InfinitePathCrossedProduct}).  Combining this with Theorem \ref{thm:Pimsner} will give our result.

Pimsner's proof that $(4) \Rightarrow (5)$ in Theorem \ref{thm:Pimsner} involves decomposing $X$ into its orbits (viewed as discrete spaces) and representing the elements of $C(X)$ as weighted bilateral shifts on the $\ell^2$ spaces of these orbits.  Our techniques are similar but are more involved since a topological graph can be significantly more complicated than a dynamical system.  Given an infinite path $\alpha$ in a topological graph $E$, we define a directed tree $\Gamma_\alpha$ which is thought of as the orbit of $\alpha$.  In Section \ref{sec:Representations} we  modify a construction of Katsura to represent $C^*(E)$ on $\ell^2(\Gamma_\alpha)$ by viewing the elements of $C_0(E^0)$ as diagonal operators and the elements of $C_c(E^1)$ as weighted shifts on $\ell^2(\Gamma_\alpha)$.  Weighted shifts on directed trees were defined and extensively studied in \cite{WeightedShifts}.  In particular, it was shown that the Fredholm theory of weighted shifts on directed a tree is closely related to the combinatorial structure of the tree (see Corollary \ref{cor:SurjectiveShift}).  This allows us to relate the finiteness of $C^*(E)$ to the ``combinatorial'' structure of the infinite path space $E^\infty$ and hence also to the graph $E$.

The paper is organized as follows.  In Sections \ref{sec:CuntzPimsner} and \ref{sec:TopologicalGraphs} we recall the necessary definitions and results about Cuntz-Pimsner algebras and topological graph algebras.  Section \ref{sec:WeightedShifts} contains the necessary results from \cite{WeightedShifts} on weighted shifts on directed trees.  In Section \ref{sec:Representations}, we build representations of topological graph algebras on weighted trees, and finally Section \ref{sec:Finiteness} contains our main results.

We will adopt the following conventions throughout the paper.  For graph algebras, we follow the conventions in \cite{Raeburn}. In particular, the partial isometries go in the same direction as the edges.  We assume $C^*$-algebras are separable, spaces and second countable, and graphs are countable.  If $X$ is a set, let $(\delta_x)_{x \in X}$ denote the standard orthonormal basis of $\ell^2(X)$.  That is, for $x \in X$, $\delta_x \in \ell^2(X)$ is given by $\delta_x(x) = 1$ and $\delta_x(y) = 0$ when $x \neq y$.

\begin{acknowledgements}
The author would like to thank his advisors Allan Donsig and David Pitts for their encouragement and for pointing out several errors in earlier versions of this paper.
\end{acknowledgements}

\section{Cuntz-Pimsner Algebras}\label{sec:CuntzPimsner}

Cuntz-Pimsner algebras were introduced by Pimsner in \cite{PimsnerCorr} and were expanded on by Katsura in \cite{KatsuraCorr}.  This class of algebras was further studied in \cite[Chapter 8]{Raeburn} and \cite[Section 4.6]{BrownOzawa}.  For the readers convenience, we recall the necessary definitions.

A (\emph{right}) \emph{Hilbert module} over a $C^*$-algebra $A$ is a right $A$-module $X$ together with an \emph{inner product} $\langle \cdot, \cdot \rangle :X \times X \rightarrow A$ such that for every $\xi, \eta, \zeta \in X$, $a \in A$, and $\alpha \in \mathbb{C}$,
\begin{enumerate}
  \item $\langle\xi, \xi\rangle \geq 0$ with equality if and only if $\xi = 0$,
  \item $\langle\xi, \alpha \eta + \zeta\rangle = \alpha \langle\xi, \eta\rangle + \langle\xi, \zeta\rangle$,
  \item $\langle\xi, \eta a\rangle = \langle\xi, \eta\rangle a$, and
  \item $\langle\xi, \eta\rangle^* = \langle\eta, \xi\rangle$,
\end{enumerate}
and $X$ is complete with respect to the norm $\|\xi\|^2 = \|\langle\xi, \xi\rangle\|$.

An operator $T: X \rightarrow X$ is called \emph{adjointable} if there is an operator $T^*:X \rightarrow X$ such that
\[ \langle\xi, \eta\rangle = \langle\xi, T^*\eta\rangle \]
for every $\xi, \eta \in X$.  If $T$ is adjointable, then $T^*$ is unique, and $T$ and $T^*$ are both $A$-linear and bounded.  The collection $\mathbb{B}(X)$ of all adjointable operators on $X$ is a $C^*$-algebra.  A \emph{$C^*$-correspondence} over $A$ is a Hilbert $A$-module $X$ together with a *-homomorphism $\lambda: A \rightarrow \mathbb{B}(X)$.  We often write $\lambda(a)(\xi) = a\xi$ for $a \in A$ and $\xi \in X$.

A \emph{Toeplitz representation} of $X$ on a $C^*$-algebra $B$ is a pair $(\pi, \tau)$ where $\pi:A \rightarrow B$ is a *-homomorphism, $\tau:X \rightarrow B$ is a linear map such that for every $a \in A$ and $\xi, \eta \in X$,
\[ \tau(a \xi) = \pi(a) \tau(\xi) \qquad \text{and} \qquad \tau(\xi)^*\tau(\eta) = \pi(\langle\xi, \eta\rangle). \]
Note that for every $a \in A$ and $\xi \in X$, we have $\tau(\xi a) = \tau(\xi) \pi(a)$ since
\[ \|\tau(\xi a) - \tau(\xi) \pi(a)\|^2 = \|\pi(\langle\xi a, \xi a\rangle - \langle\xi a, \xi\rangle a - a^*\langle\xi, \xi a\rangle + a^* \langle\xi, \xi\rangle a) \|^2=0 \]
Moreover, the computation
\[ \|\tau(\xi)\|^2 = \|\tau(\xi)^* \tau(\xi)\| = \|\pi(\langle\xi, \xi\rangle)\| \leq \|\langle\xi, \xi\rangle\| = \|\xi\|^2, \]
implies $\|\tau\| \leq 1$ and if $\pi$ is injective, then $\tau$ is isometric.  Let $\mathcal{T}(X)$ be the universal $C^*$-algebra generated by a Toeplitz representation of $X$.  More precisely, $\mathcal{T}(X)$ is a $C^*$-algebra together with a Toeplitz representation $(\widetilde{\pi}, \widetilde{\tau})$ of $X$ on $\mathcal{T}(X)$ such that for any other Toeplitz representation $(\pi, \tau)$ on a $C^*$-algebra $B$, there is a unique morphism $\psi: \mathcal{T}(X) \rightarrow B$ such that $\psi \circ \tilde{\pi} = \pi$ and $\psi \circ \tilde{\tau} = \tau$.
\[ \begin{tikzcd} A \arrow{r}{\widetilde{\pi}} \arrow[bend right]{ddr}[swap]{\pi} & \mathcal{T}(X) \arrow[dotted]{dd}{\exists\hskip .5pt ! \,\psi} & X \arrow{l}[swap]{\widetilde{\tau}} \arrow[bend left]{ddl}{\tau} \\ & & \\ & B & \end{tikzcd} \]
The $C^*$-algebra $\mathcal{T}(X)$ is called the \emph{Toeplitz-Pimsner algebra} associated to $X$.

For $\xi, \eta \in X$, define $\theta_{\xi, \eta} = \xi \langle\eta, \cdot\rangle: X \rightarrow X$ and let $\mathbb{K}(X)$ be the norm closed span of the $\theta_{\xi, \eta}$ in $\mathbb{B}(X)$.  Then $\mathbb{K}(X)$ is an ideal in $\mathbb{B}(X)$ and given a Toeplitz representation $(\pi, \tau)$ of $X$ on $B$, there is a unique *-homomorphism $\varphi:\mathbb{K}(X) \rightarrow B$ such that $\varphi(\theta_{\xi, \eta}) = \tau(\xi) \tau(\eta)^*$ for every $\xi, \eta \in X$.  Moreover, for each $a \in A$, $k \in \mathbb{K}(X)$, and $\xi \in X$,
\[ \pi(a)\varphi(k) = \varphi(\lambda(a)k), \quad \varphi(k)\pi(a) = \varphi(k\lambda(a)), \quad \text{and} \quad \varphi(k)\tau(\xi) = \tau(k\xi). \]
If $\pi$ is injective, then $\varphi$ is also injective.

If $X$ is a $C^*$-correspondence over $A$, define an ideal $J_X \subseteq A$ by
\[ J_X = \lambda^{-1}(\mathbb{K}(X)) \cap \{ a \in A : a b = 0 \text{ for every } b \in \ker(\lambda) \}. \]
A Toeplitz representation $(\pi, \tau)$ is \emph{covariant} if for every $a \in J_X$, $\varphi(\lambda(a)) = \pi(a)$.  Let $\mathcal{O}(X)$ denote the universal $C^*$-algebra generated by a covariant Toeplitz representation of $X$ as with the Toeplitz-Pimsner algebra $\mathcal{T}(X)$.  The $C^*$-algebra $\mathcal{O}(X)$ is called the \emph{Cuntz-Pimsner algebra} associated to $X$.

It can be shown that the Cuntz-Pimnser algebra $\mathcal{O}(X)$ always exists and the canonical maps $\pi: A \rightarrow \mathcal{O}(X)$ and $\tau: X \rightarrow \mathcal{O}(X)$ are injective and hence isometric.  Moreover, $\mathcal{O}(X)$ is unique up to a canonical isomorphism and is generated as a $C^*$-algebra by $\pi(A)$ and $\tau(A)$.  The same is true for the Toeplitz-Pimsner algebra $\mathcal{T}(X)$.  There are also concrete descriptions of $\mathcal{O}(X)$ and $\mathcal{T}(X)$ (see \cite[Section 4.6]{BrownOzawa} for example), but for our purposes, the abstract definition is more helpful.

\section{Topological Graphs}\label{sec:TopologicalGraphs}

In this section, we recall the necessary material from topological graphs.  All the material is taken from \cite{KatsuraTGA1}.

A \emph{topological graph} $E = (E^0, E^1, r, s)$ consists of locally compact second countable spaces $E^i$ and continuous maps $r, s : E^1 \rightarrow E^0$ such that $s$ is a local homeomorphism; i.e.\ for every $e \in E^1$, there is an open neighborhood $U \subseteq E^1$ of $e$ such that $s|_U$ is a homeomorphism of $U$ onto $s(U) \subseteq E^0$ and $s(U)$ is an open neighborhood of $s(e)$.  It can be shown that a local homeomorphism is always an open map.

Suppose $E$ is a topological graph.  Given $a \in C_0(E^0)$ and $\xi, \eta \in C_c(E^1)$, define
\begin{align*}
   (a \xi)(e) &= a(r(e))\xi(e)  && \text{for every $e \in E^1$,} \\
   (\xi a)(e) &= \xi(e)a(s(e))  && \text{for every $e \in E^1$, and} \\
   \langle\xi, \eta\rangle(v) &= \sum_{e \in s^{-1}(v)} \overline{\xi(e)} \eta(e)  && \text{for every $v \in E^0$.}
\end{align*}
Define $X(E)$ to be the $C^*$-correspondence over $C_0(E^0)$ obtained from completing $C_c(E^1)$ and let $C^*(E) = \mathcal{O}(X(E))$.

\begin{remark}
Note that the sum in the definition of $\langle \xi, \eta \rangle$ above is a finite sum.  To see this, suppose $v \in E^0$.  If $e \in E^1$ is an accumulation point of $s^{-1}(v) \subseteq E^0$, then there is a sequence $(e_n) \subseteq s^{-1}(v)$ such that $e_n \rightarrow e$.  with $e_n \neq e$ for every $n$.  Choose an open set $U \subseteq E^1$ such that $e \in U$.  Then there is an $n \geq 1$ such that $e_n \in U$.  Now, $s(e_n) = s(e)$ and hence $s|_U$ is not injective.  This contradicts the fact that $s$ is a local homeomorphism.  Therefore $s^{-1}(v)$ has no accumulation points.  Thus if $K \subseteq E^1$ is compact, $K \cap s^{-1}(v)$ if finite for every $v \in E^0$.  In particular, if $\xi, \eta \in C_c(E^1)$, then $\operatorname{supp}(\xi) \cap s^{-1}(v)$ and $\operatorname{supp}(\eta) \cap s^{-1}(v)$ are finite sets.  The claim follows.
\end{remark}

\begin{remark}
Given $\xi, \eta \in C_c(E^1)$, it is not immediately obvious that $\langle \xi, \eta \rangle$ is a continuous function on $E^0$.  This is the content of Lemma 1.5 in \cite{KatsuraTGA1}.  The lemma relies heavily on the fact that $s$ is a local homeomorphism.
\end{remark}

\begin{example}
When $E^0$ and $E^1$ are both discrete, then $E$ is a graph in the sense of \cite{Raeburn} and $C^*(E)$ is the usual graph $C^*$-algebra.
\end{example}

\begin{example}
Suppose $X$ is a locally compact Hausdorff space and $f$ is a homeomorphism of $X$.  Define a topological graph $E$ by $E^0 = E^1 = X$, $r = \operatorname{id}$ and $s = f$.  Then $C^*(E) \cong C(X) \rtimes_f \mathbb{Z}$.

In general one can think of $C^*(E)$ as a crossed product by a partially defined, multi-valued, continuous map $E^0 \rightarrow E^0$ given by $v \mapsto r(s^{-1}(v)) \subseteq E^0$ for each $v \in s(E^1) \subseteq E^0$.
\end{example}

Given $v \in E^0$, we say $v$ is \emph{regular} if there is a neighborhood $V$ of $v$ such that $r^{-1}(V)$ is compact and $r(r^{-1}(V)) = V$.  Let $E^0_{reg}$ denote the collection of regular vertices in $E^0$.  Note that $E^0_{reg}$ is the largest open subset $U \subseteq E^0$ such that $r$ restricts to a proper surjection from $r^{-1}(U)$ onto $U$.  Moreover, for each $v \in E^0_{reg}$, the set $r^{-1}(v) \subseteq E^1$ is compact and non-empty.  The elements of $E^0_{sng} := E^0 \setminus E^0_{reg}$ are called the \emph{singular vertices} of $E$.  A vertex $v \in E^0$ is called a \emph{sink} if $s^{-1}(v) = \emptyset$ and a \emph{source} if $v \in E^0 \setminus \overline{r(E^1)}$.

\begin{theorem}[Theorem 1.24 in \cite{KatsuraTGA1}]
For a topological graph $E$, $J_{X(E)} = C_0(E^0_{reg})$.  In particular, a Toepltiz representation $(\pi, \tau)$ of $X(E)$ is covariant if and only if $\pi(a) = \varphi(\lambda(a))$ for each $a \in C_0(E^0_{reg})$.
\end{theorem}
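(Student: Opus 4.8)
The plan is to unwind the definition of $J_{X(E)}$ directly. Writing $A = C_0(E^0)$ and $X = X(E)$, the defining formula exhibits $J_X$ as the intersection of two ideals of $A$, namely $\lambda^{-1}(\mathbb{K}(X))$ and the annihilator $(\ker\lambda)^\perp = \{a \in A : ab = 0 \text{ for all } b \in \ker\lambda\}$. Since $A$ is commutative, every ideal has the form $C_0(U)$ for an open set $U \subseteq E^0$, and intersections of ideals correspond to intersections of the open sets. So I would identify each of the two ideals with $C_0$ of an explicit open subset of $E^0$, intersect, and check that the result is $E^0_{reg}$. The only genuinely analytic input is the identification of $\lambda^{-1}(\mathbb{K}(X))$; the rest is point-set topology.

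First I would compute $\ker\lambda$ and its annihilator. Since $(\lambda(a)\xi)(e) = a(r(e))\xi(e)$, choosing $\xi \in C_c(E^1)$ nonvanishing near an arbitrary edge shows $\lambda(a) = 0$ if and only if $a$ vanishes on $r(E^1)$, hence on $\overline{r(E^1)}$ by continuity; thus $\ker\lambda = C_0(E^0 \setminus \overline{r(E^1)})$. Taking annihilators of ideals in the commutative algebra $A$ then gives $(\ker\lambda)^\perp = C_0(\operatorname{int}\,\overline{r(E^1)})$, the functions vanishing off the interior of $\overline{r(E^1)}$.

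The main step is to show $\lambda^{-1}(\mathbb{K}(X)) = C_0(E^0_{fin})$, where $E^0_{fin}$ is the open set of vertices having a neighborhood $V$ with $r^{-1}(V)$ compact. For the inclusion $\supseteq$, I would first treat $a \in C_c(E^0_{fin})$; then $r^{-1}(\operatorname{supp} a)$ is compact, so it is covered by finitely many open sets on which the local homeomorphism $s$ restricts to a homeomorphism. Using the elementary computation that $\theta_{\xi,\eta}$ acts as multiplication by $\xi\overline{\eta}$ whenever $\xi,\eta$ are supported where $s$ is injective, together with a partition of unity subordinate to this cover, I would write $\lambda(a)$ as a finite sum of operators $\theta_{\xi_i,\eta_i}$, hence $\lambda(a) \in \mathbb{K}(X)$; a density argument extends this to all of $C_0(E^0_{fin})$. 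The inclusion $\subseteq$ is the crux and the main obstacle: given $v_0 \notin E^0_{fin}$ with $a(v_0) \neq 0$, I would use second countability to produce unit sections $\zeta_n$ concentrated near edges $e_n$ with $r(e_n) \to v_0$ that eventually leave every compact subset of $E^1$. Any finite-rank $T = \sum_i \theta_{\xi_i,\eta_i}$ then kills $\zeta_n$ for large $n$ (its defining sections have compact support), whereas $\|\lambda(a)\zeta_n\| \to |a(v_0)|$, forcing $\operatorname{dist}(\lambda(a), \mathbb{K}(X)) \geq |a(v_0)| > 0$. The delicate point is arranging the escaping edges $e_n$, which is exactly where the failure of $r$ to be proper near $v_0$ enters.

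Finally I would reconcile the two computations. Intersecting gives $J_X = C_0\big(E^0_{fin} \cap \operatorname{int}\,\overline{r(E^1)}\big)$, so it remains to verify the set identity $E^0_{reg} = E^0_{fin} \cap \operatorname{int}\,\overline{r(E^1)}$. The inclusion $\subseteq$ is immediate from the definition of regularity. For $\supseteq$, given $v$ in the right-hand side I would take a compact neighborhood $V$ lying inside both a neighborhood with compact $r$-preimage and an open set contained in $\overline{r(E^1)}$; properness of $r$ on the compact set $r^{-1}(V)$ shows $\overline{r(E^1)} \cap V = r(E^1) \cap V$, whence $r(r^{-1}(V)) = V$ while $r^{-1}(V)$ is compact, so $v$ is regular. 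This yields $J_{X(E)} = C_0(E^0_{reg})$. The reformulation for Toeplitz representations is then immediate: by definition $(\pi,\tau)$ is covariant if and only if $\varphi(\lambda(a)) = \pi(a)$ for all $a \in J_X$, and $J_X = C_0(E^0_{reg})$.
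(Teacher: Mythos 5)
Your proposal is essentially correct, but note that the paper itself offers no proof of this statement: it is imported verbatim as Theorem 1.24 of Katsura's paper \cite{KatsuraTGA1}, so the only comparison available is with Katsura's original argument and with the fragment of it that the paper reproduces in Section \ref{sec:Representations}. Your route is in fact Katsura's: split $J_{X(E)}$ into $\lambda^{-1}(\mathbb{K}(X))$ and the annihilator of $\ker\lambda$, identify each with $C_0$ of an open set, and intersect. Your computation $\ker\lambda = C_0(E^0\setminus\overline{r(E^1)})$ with annihilator $C_0(\operatorname{int}\overline{r(E^1)})$ is right, and your partition-of-unity argument for $\lambda(C_c(E^0_{fin})) \subseteq \mathbb{K}(X)$ is literally the computation the paper carries out (for covariance of the tree representations) in Section \ref{sec:Representations}. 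The escaping-edge argument for the reverse inclusion is also the standard one; to make it airtight you should say explicitly that local compactness plus second countability gives $\sigma$-compactness of $E^1$ and a decreasing base of \emph{closed} neighborhoods $V_n$ of $v_0$, so that non-compactness of the closed sets $r^{-1}(V_n)$ lets you pick $e_n \in r^{-1}(V_n)$ outside the $n$-th term of a compact exhaustion.

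One step of your proof deserves emphasis because it is genuinely necessary here and easy to miss: the paper's definition of $E^0_{reg}$ (a neighborhood $V$ with $r^{-1}(V)$ compact \emph{and} $r(r^{-1}(V)) = V$) is not the formulation Katsura's theorem is stated in (he uses $E^0_{rg} = E^0_{fin}\setminus\overline{E^0_{sce}}$), so your set identity $E^0_{reg} = E^0_{fin} \cap \operatorname{int}\overline{r(E^1)}$ is exactly the bridge required, and your argument for it works. Only the phrasing is slightly off: the closedness-of-image argument should be run on an \emph{open} set $W$ with $r^{-1}(W)$ contained in a compact set (shrink the $E^0_{fin}$-neighborhood and take interiors), since for $w \in V$ the approximating net $r(e_n) \to w$ is only eventually in neighborhoods of $w$, and $V$ need not be a neighborhood of its own boundary points; with $W$ open and $V \subseteq W$ compact, the net eventually lies in the compact set $r^{-1}(\overline{W})$ (or $r^{-1}(W_1)$ with $W_1$ the original neighborhood), a subnet converges, and its limit is a preimage of $w$ in $r^{-1}(V)$. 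With that repair the argument is complete.
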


A \emph{path} if $E$ is a list of edges $\alpha = e_n e_{n-1} \cdots e_1$ in $E^1$ such that if $1 \leq k < n$, $r(e_k) = s(e_{k+1})$.  If also $r(e_n) = s(e_1)$, then $\alpha$ is called a \emph{loop}.  Set $s(\alpha) = s(e_1)$ and $r(\alpha) = r(e_n)$.  The integer $n$ is called the \emph{length} of $\alpha$ and is written $n = |\alpha|$.  Let $E^n$ denote the collection of all paths in $E$ with length $n$. Equip $E^n$ with the product topology and let $E^* = \coprod_{n=0}^\infty E^n$.  Then $r:E^* \rightarrow E^0$ is continuous and $s:E^* \rightarrow E^0$ is a local homeomorphism.

The following is known as the Gauge Invariance Uniqueness Theorem.

\begin{theorem}[Theorem 4.5 in \cite{KatsuraTGA1}]\label{thm:GIUT}
Suppose $(\pi, \tau)$ is a covariant representation of $E$ on a $C^*$-algebra $A$ and suppose $A$ has a gauge action $\gamma: \mathbb{T} \rightarrow \operatorname{Aut} A$ such that $\gamma_z(\pi(a)) = \pi(a)$ for every $a \in C_0(E^0)$, $z \in \mathbb{T}$ and $\gamma_z(\tau(\xi)) = z \tau(\xi)$ for every $\xi \in X(E)$, $z \in \mathbb{T}$.  Then the induced map $C^*(E) \rightarrow A$ is injective if and only if $\pi$ is injective.
\end{theorem}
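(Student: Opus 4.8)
The plan is to reduce injectivity of the induced map to a statement about the fixed-point algebra of the gauge action, and then to prove that statement by induction on the gauge degree. First I would record the map and dispose of the easy direction. By the universal property of $C^*(E) = \mathcal{O}(X(E))$ applied to the covariant representation $(\pi,\tau)$, there is a $*$-homomorphism $\rho \colon C^*(E) \to A$ with $\rho\circ\widetilde\pi = \pi$ and $\rho\circ\widetilde\tau = \tau$, where $(\widetilde\pi,\widetilde\tau)$ is the canonical covariant representation generating $C^*(E)$. Since the canonical inclusion $\widetilde\pi$ is injective (Section \ref{sec:CuntzPimsner}), injectivity of $\rho$ immediately forces injectivity of $\pi = \rho\circ\widetilde\pi$; this is the ``only if'' direction. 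So I assume $\pi$ is injective and must show $\rho$ is injective.

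Next I would use the gauge actions to pass to the fixed-point algebra. $C^*(E)$ carries a canonical gauge action $\beta\colon\mathbb{T}\to\operatorname{Aut}C^*(E)$ with $\beta_z(\widetilde\pi(a)) = \widetilde\pi(a)$ and $\beta_z(\widetilde\tau(\xi)) = z\,\widetilde\tau(\xi)$, and the hypotheses on $\gamma$ say exactly that $\rho$ is equivariant: $\rho\circ\beta_z = \gamma_z\circ\rho$. Averaging against Haar measure gives faithful conditional expectations $\Phi(x) = \int_{\mathbb{T}}\beta_z(x)\,dz$ onto $\mathcal{F} := C^*(E)^{\beta}$ and $\Psi(a) = \int_{\mathbb{T}}\gamma_z(a)\,dz$ onto $A^{\gamma}$, and equivariance gives $\rho\circ\Phi = \Psi\circ\rho$. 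A positivity argument then reduces everything to $\mathcal{F}$: if $\rho(x) = 0$ then $\rho(\Phi(x^*x)) = \Psi(\rho(x^*x)) = 0$, and as $\Phi(x^*x)\geq 0$ lies in $\mathcal{F}$, injectivity of $\rho|_{\mathcal{F}}$ would give $\Phi(x^*x) = 0$, hence $x = 0$ by faithfulness of $\Phi$. Thus it suffices to show $\rho$ is faithful on $\mathcal{F}$.

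The heart of the argument is the structure of $\mathcal{F}$. Writing $\widetilde\tau^{\,n}\colon X(E)^{\otimes n}\to C^*(E)$ for $\widetilde\tau^{\,n}(\xi_1\otimes\cdots\otimes\xi_n) = \widetilde\tau(\xi_1)\cdots\widetilde\tau(\xi_n)$ and $\varphi^{(n)}\colon\mathbb{K}(X(E)^{\otimes n})\to C^*(E)$ for the induced maps on compacts (with $X(E)^{\otimes 0} = C_0(E^0)$ and $\varphi^{(0)} = \widetilde\pi$), every gauge-fixed monomial has balanced degree, so $\mathcal{F} = \overline{\bigcup_N \mathcal{B}_N}$ with $\mathcal{B}_N = \sum_{n=0}^{N}\varphi^{(n)}(\mathbb{K}(X(E)^{\otimes n}))$ an increasing chain of $C^*$-subalgebras. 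I would prove $\rho$ is isometric on each $\mathcal{B}_N$ by induction on $N$; passing to the closed union then finishes. The base case $\mathcal{B}_0 = \widetilde\pi(C_0(E^0))$ is exactly the hypothesis that $\pi$ is injective. For the inductive step I would use that the top-degree piece $\mathcal{F}_N := \varphi^{(N)}(\mathbb{K}(X(E)^{\otimes N}))$ is an ideal in $\mathcal{B}_N$ whose quotient is a quotient of $\mathcal{B}_{N-1}$. Since $\pi$ is injective, it remains injective as the coefficient map of the representation $(\pi,\tau^N)$ of $X(E)^{\otimes N}$, whence $\varphi^{(N)}$, and so $\rho|_{\mathcal{F}_N}$, is injective by the fact recorded in Section \ref{sec:CuntzPimsner} that $\pi$ injective implies $\varphi$ injective. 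Combining injectivity of $\rho$ on the ideal $\mathcal{F}_N$ with faithfulness on the quotient supplied by the inductive hypothesis yields faithfulness on $\mathcal{B}_N$.

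I expect the main obstacle to be precisely the bookkeeping in the inductive step. In the Toeplitz-Pimsner algebra the graded pieces $\varphi^{(n)}(\mathbb{K}(X(E)^{\otimes n}))$ assemble almost freely, but the passage to the Cuntz-Pimsner quotient forces overlaps between consecutive degrees, and these overlaps are governed exactly by the covariance relation $\widetilde\pi(a) = \varphi^{(1)}(\lambda(a))$ for $a\in J_{X(E)} = C_0(E^0_{reg})$. Verifying that the quotient $\mathcal{B}_N/\mathcal{F}_N$ is genuinely controlled by $\mathcal{B}_{N-1}$, and that injectivity of $\pi$ propagates cleanly through the resulting extension, is the delicate part. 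Because $X(E)$ is not finitely generated projective for a general topological graph, $\mathbb{K}(X(E)^{\otimes n})$ is a genuine algebra of compacts rather than a matrix algebra, so controlling the inner products $\langle\xi,\eta\rangle(v) = \sum_{e\in s^{-1}(v)}\overline{\xi(e)}\eta(e)$, and hence these ideals, leans essentially on the local-homeomorphism property of $s$ from Section \ref{sec:TopologicalGraphs}.
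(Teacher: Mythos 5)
A preliminary remark: the paper does not prove this statement at all — it is imported verbatim from Katsura (Theorem 4.5 of \cite{KatsuraTGA1}) — so the comparison is really with the standard proof in the literature. Your outline follows that standard architecture correctly up to a point: the easy direction via the universal property, the equivariance $\rho\circ\beta_z=\gamma_z\circ\rho$, the faithful expectations $\Phi,\Psi$ (granting, as one must, that $\gamma$ is point-norm continuous so that $\Psi$ exists), the reduction to faithfulness on the core $\mathcal{F}=\overline{\bigcup_N\mathcal{B}_N}$, the base case, and the injectivity of $\rho$ on each ideal $\mathcal{F}_N=\varphi^{(N)}(\mathbb{K}(X(E)^{\otimes N}))$ are all sound.

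The gap is the inductive step, and it is genuine, not bookkeeping. The extension argument needs injectivity of the induced map $\mathcal{B}_N/\mathcal{F}_N\to\rho(\mathcal{B}_N)/\rho(\mathcal{F}_N)$; concretely, for $y\in\mathcal{B}_{N-1}$, the condition $\rho(y)\in\rho(\mathcal{F}_N)$ must force $y\in\mathcal{F}_N$. The inductive hypothesis — faithfulness of $\rho$ on the \emph{subalgebra} $\mathcal{B}_{N-1}$ — gives no purchase on this: unwinding it, you would need ``$\rho(y-z)=0$ with $z\in\mathcal{F}_N$ implies $y=z$,'' which is injectivity on $\mathcal{B}_N$, i.e.\ the very statement being proved. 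That no purely formal argument can close this loop is shown by the quotient map $q:\mathcal{T}(X(E))\to C^*(E)$, where $(\widetilde{\pi}_{\mathcal{T}},\widetilde{\tau}_{\mathcal{T}})$ denotes the canonical Toeplitz representation generating $\mathcal{T}(X(E))$: the map $q$ is gauge-equivariant, injective on $\widetilde{\pi}_{\mathcal{T}}(C_0(E^0))$, injective on each ideal $\varphi^{(N)}_{\mathcal{T}}(\mathbb{K}(X(E)^{\otimes N}))$, and the same quotient structure of the core holds there; yet for $0\neq a\in C_0(E^0_{reg})$ the element $\widetilde{\pi}_{\mathcal{T}}(a)-\varphi^{(1)}_{\mathcal{T}}(\lambda(a))$ is a nonzero element of $\mathcal{B}_1^{\mathcal{T}}$ killed by $q$, so $q$ is not injective on $\mathcal{B}_1^{\mathcal{T}}$ whenever $E^0_{reg}\neq\emptyset$. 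Since your inductive mechanism nowhere invokes covariance of $(\pi,\tau)$, it would ``prove'' injectivity of $q$ as well. What is missing is exactly Katsura's core analysis: a representation-independent characterization of which sums $\sum_{k=0}^N\varphi^{(k)}(T_k)$ vanish under a covariant representation with injective coefficient map — equivalently, an explicit description of the overlap $\mathcal{B}_{N-1}\cap\mathcal{F}_N$ in terms of $J_{X(E)}=C_0(E^0_{reg})$, from which one gets $\rho(\mathcal{B}_{N-1})\cap\rho(\mathcal{F}_N)=\rho(\mathcal{B}_{N-1}\cap\mathcal{F}_N)$. With that lemma your induction closes; without it, the proposal is a correct strategy but not a proof.
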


\section{Weighted Shifts on Directed Trees}\label{sec:WeightedShifts}

All of the results in this section are taken from \cite{WeightedShifts}.

\begin{definition}
A \emph{directed tree} is a (countable, discrete) graph $E$ such that
\begin{enumerate}
  \item $E$ has no loops,
  \item $r$ is injective, and
  \item $E$ is connected; i.e.\ given $v, w \in E^0$, there are vertices $v_0, v_1, \ldots, v_n \in E^0$ such that $v_0 = v$, $v_n = w$, and there are edges $e_1, \ldots, e_n \in E^1$ such that either $s(e_i) = v_i$ and $r(e_i) = v_{i-1}$ or $r(e_i) = v_i$ and $s(e_i) = v_{i-1}$.
\end{enumerate}
\end{definition}

The connectedness condition in the above definition says any two vertices should be connected by an \emph{undirected} path.

\begin{fact}
If $E$ is a directed tree, then $E$ has at most one source.
\end{fact}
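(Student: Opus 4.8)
The plan is to argue by contradiction, using the two structural hypotheses of a directed tree---injectivity of $r$ and connectedness---together with the observation that, for a discrete graph, a source is precisely a vertex $v$ that is the range of no edge, i.e. $v \notin r(E^1)$ (the closure in the definition is superfluous since $E^1$ is discrete). Since $r$ is injective, each vertex is the range of \emph{at most one} edge. I will phrase the whole argument in terms of which endpoint of an edge is its range, which keeps things independent of any pictorial orientation convention.

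So suppose, toward a contradiction, that $E$ has two distinct sources $u$ and $w$. By connectedness there is an undirected path joining them, and by deleting any repeated vertices I may assume the path $u = v_0, v_1, \ldots, v_n = w$ has pairwise distinct vertices, joined by edges $e_1, \ldots, e_n$ with $\{s(e_i), r(e_i)\} = \{v_{i-1}, v_i\}$ for each $i$. The key step is to track the range of each edge along this path. Because $u = v_0$ is the range of no edge, $e_1$ cannot have range $v_0$, so $r(e_1) = v_1$; symmetrically, because $w = v_n$ is the range of no edge, $r(e_n) = v_{n-1}$.

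Now introduce the indicator $\epsilon_i = +1$ when $r(e_i) = v_i$ and $\epsilon_i = -1$ when $r(e_i) = v_{i-1}$. The previous step gives $\epsilon_1 = +1$ and $\epsilon_n = -1$, so there is an index $k$ with $1 \le k < n$, $\epsilon_k = +1$, and $\epsilon_{k+1} = -1$. This means $r(e_k) = v_k = r(e_{k+1})$, so the interior vertex $v_k$ is the range of two edges. Since the vertices $v_{k-1}, v_k, v_{k+1}$ are distinct, the edges $e_k$ and $e_{k+1}$ are themselves distinct, contradicting the injectivity of $r$. Hence $E$ cannot have two distinct sources.

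I expect the only genuinely delicate point to be the reduction to a path with no repeated vertices, which is exactly what forces $e_k \ne e_{k+1}$: were consecutive edges allowed to coincide, one would have $v_{k-1} = v_{k+1}$ and the contradiction would evaporate. By contrast, the no-loops hypothesis plays no direct role beyond being part of the standing definition; injectivity of $r$ and connectedness carry the entire argument.
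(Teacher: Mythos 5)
Your proof is correct. There is nothing in the paper to compare it against: this statement appears only as a \emph{Fact}, and Section 4 imports all of its results without proof from the memoir of Jab{\l}o\'{n}ski, Jung, and Stochel \cite{WeightedShifts}, so your argument serves as a complete, self-contained substitute. The walk-to-path reduction, the forced orientations $r(e_1) = v_1$ and $r(e_n) = v_{n-1}$ at the two sources, and the resulting sign change producing two distinct edges with a common range are all sound, and your structural observations are accurate: only injectivity of $r$ and connectedness are used (the no-loops axiom is genuinely irrelevant here; a directed cycle with $r$ injective simply has no source, consistent with the statement), and the closure in the paper's definition of source is indeed vacuous for a discrete graph. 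Two minor remarks. First, your sign-change step tacitly assumes $n \geq 2$; when $n = 1$ the preceding step already yields $r(e_1) = v_1$ and $r(e_1) = v_0$ with $v_0 \neq v_1$, an immediate contradiction, so the gap is cosmetic, but it would be cleaner to dispose of that case explicitly. Second, the distinctness of $v_{k-1}, v_k, v_{k+1}$ that you rightly flag as the delicate point deserves the one-line justification that $e_k = e_{k+1}$ would force $\{v_{k-1}, v_k\} = \{v_k, v_{k+1}\}$ and hence $v_{k-1} = v_{k+1}$; with that spelled out, the proof is airtight.
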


\begin{definition}
Suppose $E$ is a directed tree and $\lambda = (\lambda_v)_{v \in E^0} \subseteq \mathbb{C}$. Given $f \in \ell^2(E^0)$, define $\Lambda_E f: E^0 \rightarrow \mathbb{C}$ by
\[ (\Lambda_E f)(v) = \begin{cases} \lambda_v f(s(r^{-1}(v))) & v \in r(E^1), \\ 0 & v \notin r(E^1). \end{cases} \]
Now define a (possibly unbounded) operator $S_{\lambda}$ on $\ell^2(E^0)$ by
\[ \mathcal{D}(S_{\lambda}) = \{ f \in \ell^2(E^0) : \Lambda_E f \in \ell^2(E^0) \} \]
and $S_{\lambda}f = \Lambda_E f$ for $f \in \mathcal{D}(S_{\lambda})$.
$S_{\lambda}$ is called a \emph{weighted shift} on $E$.
\end{definition}

We are only interested in the case $S_{\lambda}$ is bounded.  Fortunately, there is a simple characterization of the weights $\lambda$ which define bounded operators.  All the results stated in this section are also true when $S_{\lambda}$ is an unbounded densely defined operator.

\begin{theorem}
If $E$ is a directed tree and $\lambda = (\lambda_v)_{v \in E^0} \subseteq \mathbb{C}$, then the following are equivalent:
\begin{enumerate}
  \item $\mathcal{D}(S_{\lambda}) = \ell^2(E^0)$,
  \item $S_{\lambda} \in \mathbb{B}(\ell^2(E^0))$,
  \item $\sup_{v \in E^0} \sum_{w \in r(s^{-1}(v))} |\lambda_w|^2 < \infty$.
\end{enumerate}
In this case,
\[ \| S_{\lambda} \|^2 = \sup_{v \in E^0} \sum_{w \in r(s^{-1}(v))} |\lambda_w|^2. \]
\end{theorem}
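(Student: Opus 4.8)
The plan is to establish the cycle of implications $(2) \Rightarrow (1) \Rightarrow (3) \Rightarrow (2)$, with the norm formula falling out of the proof of $(3) \Rightarrow (2)$. The implication $(2) \Rightarrow (1)$ is definitional: a bounded operator on $\ell^2(E^0)$ is automatically everywhere-defined, so if $S_\lambda \in \mathbb{B}(\ell^2(E^0))$ then its domain must be all of $\ell^2(E^0)$. The conceptual heart of the argument lies in the equivalence between the analytic condition and the combinatorial supremum in $(3)$, and it is worth setting up the right orthogonal decomposition of $\ell^2(E^0)$ before doing any estimates.

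The key structural observation is that because $E$ is a directed tree, the map $r$ is injective, so every vertex $v$ that lies in $r(E^1)$ has a \emph{unique} edge $e_v$ with $r(e_v) = v$, and hence a unique ``parent'' $s(r^{-1}(v))$. Dually, the sets $r(s^{-1}(v))$ for distinct $v$ partition $r(E^1)$: the children of distinct vertices are disjoint (two children sharing a parent is impossible by injectivity of $r$, and a vertex cannot be a child of two different parents since that would create an undirected loop, contradicting the tree condition). This means that for $f \in \ell^2(E^0)$, the vector $\Lambda_E f$ is supported on $r(E^1)$, and I would first compute, for a fixed parent vertex $u$,
\[
\sum_{v \in r(s^{-1}(u))} |(\Lambda_E f)(v)|^2 = |f(u)|^2 \sum_{v \in r(s^{-1}(u))} |\lambda_v|^2,
\]
using that each such child $v$ has $s(r^{-1}(v)) = u$. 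Summing over all $u \in E^0$ and using that the children-sets partition $r(E^1)$ gives the master identity $\|\Lambda_E f\|^2 = \sum_{u \in E^0} |f(u)|^2 \, c_u$, where $c_u := \sum_{v \in r(s^{-1}(u))} |\lambda_v|^2$.

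From this identity everything is immediate. For $(3) \Rightarrow (2)$: if $M := \sup_u c_u < \infty$, then $\|\Lambda_E f\|^2 \le M \|f\|^2$ for all $f$, so $S_\lambda$ is everywhere-defined and bounded with $\|S_\lambda\|^2 \le M$; adjointability (hence membership in $\mathbb{B}(\ell^2(E^0))$ in the Hilbert-space sense) is routine. For the matching lower bound and the norm formula, I would test against the basis vectors: plugging $f = \delta_u$ into the master identity gives $\|S_\lambda \delta_u\|^2 = c_u$, so $\|S_\lambda\|^2 \ge \sup_u c_u = M$, yielding equality. Finally $(1) \Rightarrow (3)$ is the contrapositive of the same computation: if the supremum is infinite, choose vertices $u_n$ with $c_{u_n} \to \infty$; then $\delta_{u_n} \in \ell^2(E^0)$ but $\|\Lambda_E \delta_{u_n}\|^2 = c_{u_n} \to \infty$, so these unit vectors are not all in $\mathcal{D}(S_\lambda)$ (or, if one prefers, $S_\lambda$ fails to be everywhere-defined via a standard unbounded-operator argument), contradicting $\mathcal{D}(S_\lambda) = \ell^2(E^0)$.

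The only genuine subtlety—the step I expect to be the main obstacle—is justifying the interchange of summation that produces the master identity and, relatedly, verifying that the partition claim holds with full rigor. One must confirm both that the children-sets $r(s^{-1}(u))$ are pairwise disjoint and that their union is exactly $r(E^1)$, which is precisely where the three defining properties of a directed tree (no loops, $r$ injective, connectedness) are used; a vertex with two distinct incoming edges, or a parent assigned inconsistently, would break the clean Pythagorean splitting. Since all terms are nonnegative, Tonelli's theorem licenses the rearrangement regardless of convergence, so once the partition is established the computation is unconditional and the equivalences follow formally.
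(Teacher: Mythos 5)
Since Section~4 of the paper quotes this theorem from \cite{WeightedShifts} without proof, there is no internal proof to compare against; your proposal has to stand on its own. Its engine---the identity $\|\Lambda_E f\|^2 = \sum_{u \in E^0} |f(u)|^2 c_u$ with $c_u = \sum_{v \in r(s^{-1}(u))} |\lambda_v|^2$, obtained by partitioning $r(E^1)$ into the children-sets and invoking Tonelli---is correct, and from it $(3) \Rightarrow (2)$, the norm formula via $\|S_\lambda \delta_u\|^2 = c_u$, and the definitional implication $(2) \Rightarrow (1)$ all go through exactly as you describe. One remark on the partition: disjointness of the sets $r(s^{-1}(u))$ needs only injectivity of $r$ (if $v \in r(s^{-1}(u_1)) \cap r(s^{-1}(u_2))$, the two witnessing edges have the same range, hence coincide, hence $u_1 = u_2$); your appeal to undirected loops and connectedness is unnecessary, and the clause ``two children sharing a parent is impossible'' is not what you mean, since a parent may have many children. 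In fact the whole theorem only uses injectivity of $r$, not the other tree axioms.

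The genuine gap is in $(1) \Rightarrow (3)$. From $\sup_u c_u = \infty$ you pick $u_n$ with $c_{u_n} \to \infty$ and conclude that ``these unit vectors are not all in $\mathcal{D}(S_\lambda)$.'' That inference fails precisely in the interesting case: if every $c_{u_n}$ is finite (perfectly compatible with $\sup_u c_u = \infty$), then $\|\Lambda_E \delta_{u_n}\|^2 = c_{u_n} < \infty$, so \emph{every} $\delta_{u_n}$ does lie in $\mathcal{D}(S_\lambda)$, and an everywhere-defined operator may well be unbounded on unit vectors of its domain; the vague fallback ``standard unbounded-operator argument'' does not fill this in. Two short repairs: (i) if some $c_u = \infty$ then $\delta_u \notin \mathcal{D}(S_\lambda)$ and you are done; otherwise choose inductively \emph{distinct} $u_n$ with $c_{u_n} \geq 4^n$ (possible since each $c_u$ is finite but the supremum is not) and set $f = \sum_n 2^{-n} \delta_{u_n}$; then $f \in \ell^2(E^0)$, but your master identity and Tonelli give $\|\Lambda_E f\|^2 \geq \sum_n 4^{-n} c_{u_n} = \infty$, so $f \notin \mathcal{D}(S_\lambda)$, contradicting (1). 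Alternatively, (ii) check that $S_\lambda$ is closed (if $f_k \to f$ and $\Lambda_E f_k \to g$ in $\ell^2(E^0)$, then $\Lambda_E f_k \to \Lambda_E f$ pointwise, so $g = \Lambda_E f$) and deduce $(1) \Rightarrow (2)$ from the closed graph theorem. Either patch completes the cycle, but as written the step is false.
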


\begin{example}
Given the graph
\[ \begin{tikzcd} \cdots \arrow{r} & -2 \arrow{r} & -1 \arrow{r} & 0 \arrow{r} & 1 \arrow{r} & 2 \arrow{r} & \cdots \end{tikzcd} \]
then $S_{\lambda}$ is the weighted bilateral shift on $\ell^2(\mathbb{Z})$ given by $S_{\lambda}\delta_n = \lambda_{n+1} \delta_{n+1}$ for every $n \in \mathbb{Z}$.
\end{example}

\begin{example}
Suppose $E$ is the graph below:

\begin{center}
\begin{tikzcd}[row sep = small]
            &                        & 3 \\
1 \arrow{r} & 2 \arrow{ru}\arrow{rd} &   \\
            &                        & 4
\end{tikzcd}
\end{center}

\noindent For any $\lambda \in \mathbb{C}^4$, we have
\[ S_{\lambda} = \begin{pmatrix} 0 & 0 & 0 & 0 \\ \lambda_2 & 0 & 0 & 0 \\ 0 & \lambda_3 & 0 & 0 \\ 0 & \lambda_4 & 0 & 0 \end{pmatrix} \]
\end{example}

\begin{theorem}
Let $S_{\lambda}$ be a bounded weighted shift on a directed tree $E$.  Then $S_{\lambda}$ has closed range if and only if
\[ \inf \{ \| S_{\lambda} \delta_v \| : v \in E^0 \text{ and } S_{\lambda} \delta_v \neq 0 \} > 0. \]
\end{theorem}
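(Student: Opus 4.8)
The plan is to reduce the statement to the standard functional-analytic fact that a bounded operator $T$ on a Hilbert space has closed range if and only if it is bounded below on $(\ker T)^\perp$; that is, there is a constant $c > 0$ with $\|Tf\| \geq c\|f\|$ for every $f \perp \ker T$. This equivalence is immediate from the open mapping theorem applied to the bijection $(\ker T)^\perp \to \operatorname{Ran}(T)$. Granting it, the entire problem becomes understanding the restriction of $S_\lambda$ to the orthogonal complement of its kernel, and the tree structure makes that restriction completely transparent.

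First I would record the action of $S_\lambda$ on basis vectors. Directly from the definition one computes, for each $v \in E^0$,
\[ S_\lambda \delta_v = \sum_{w \in r(s^{-1}(v))} \lambda_w \delta_w, \qquad \text{so} \qquad \|S_\lambda \delta_v\|^2 = \sum_{w \in r(s^{-1}(v))} |\lambda_w|^2. \]
The crucial structural observation is that the supports $r(s^{-1}(v))$ — the sets of children of the various vertices $v$ — are pairwise disjoint. Indeed, since $r$ is injective (this is exactly where the directed-tree hypothesis enters), each vertex $w \in r(E^1)$ is the range of a unique edge and so has a unique parent $s(r^{-1}(w))$; hence $w$ can be a child of at most one vertex. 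Consequently the vectors $\{S_\lambda \delta_v\}_{v \in E^0}$ have mutually disjoint supports and are therefore pairwise orthogonal.

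Next I would identify the kernel. Writing $S_\lambda f$ out pointwise gives $(S_\lambda f)(w) = \lambda_w\, f(s(r^{-1}(w)))$ for $w \in r(E^1)$ and $0$ otherwise, so $S_\lambda f = 0$ precisely when $f$ vanishes at every vertex $v$ possessing at least one child $w$ with $\lambda_w \neq 0$ — that is, at every vertex $v$ with $S_\lambda \delta_v \neq 0$. Setting $V = \{v \in E^0 : S_\lambda \delta_v \neq 0\}$, this shows $\ker S_\lambda = \overline{\operatorname{span}}\{\delta_v : v \notin V\}$ and hence $(\ker S_\lambda)^\perp = \overline{\operatorname{span}}\{\delta_v : v \in V\}$. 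For a general $f = \sum_{v \in V} c_v \delta_v$ in this complement, the orthogonality from the previous step yields
\[ \|S_\lambda f\|^2 = \sum_{v \in V} |c_v|^2 \, \|S_\lambda \delta_v\|^2, \]
so $S_\lambda$ behaves on $(\ker S_\lambda)^\perp$ like a diagonal operator with diagonal entries $\|S_\lambda \delta_v\|$, $v \in V$. It is then immediate that $S_\lambda$ is bounded below on $(\ker S_\lambda)^\perp$ if and only if $m := \inf_{v \in V} \|S_\lambda \delta_v\| > 0$: one direction applies the bound to $f = \delta_v$, and the other is the one-line estimate $\|S_\lambda f\|^2 \geq m^2 \sum_{v \in V}|c_v|^2 = m^2\|f\|^2$. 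Since $V$ is exactly the index set appearing in the stated infimum, combining this with the reduction of the first paragraph closes both implications.

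I do not expect the routine functional-analytic input to cause any trouble. The substantive point of the argument is the orthogonality of the family $\{S_\lambda \delta_v\}$, which is what collapses the restriction of $S_\lambda$ to $(\ker S_\lambda)^\perp$ into an essentially diagonal operator and thereby makes ``bounded below'' synonymous with a pointwise infimum condition. This orthogonality is precisely where injectivity of $r$ does the real work; everything else — the pointwise formula for $S_\lambda f$, the description of $\ker S_\lambda$, and the final estimate — is a direct unwinding of the definitions.
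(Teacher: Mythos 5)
Your proof is correct. Note, however, that the paper contains no proof of this statement to compare against: Section 4 opens by declaring that all of its results are taken from the memoir of Jab{\l}o\'{n}ski, Jung, and Stochel \cite{WeightedShifts}, and this theorem is quoted there without argument. Your route---using injectivity of $r$ to show that the vectors $S_\lambda \delta_v$ have pairwise disjoint supports, identifying $(\ker S_\lambda)^\perp$ with $\ell^2$ of the set $V = \{v : S_\lambda \delta_v \neq 0\}$, observing that $S_\lambda$ acts there as a diagonal-type operator so that being bounded below is equivalent to $\inf_{v \in V}\|S_\lambda \delta_v\| > 0$, and invoking the standard equivalence between closed range and boundedness below on $(\ker S_\lambda)^\perp$---is essentially the argument in the cited memoir, so there is no substantive divergence to report.
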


\begin{theorem}
Let $S_{\lambda}$ be a bounded weighted shift on a directed tree $E$.  Then $S_{\lambda}$ is injective if and only if $S_{\lambda} \delta_v \neq 0$ for every $v \in E^0$.
\end{theorem}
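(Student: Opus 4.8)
The plan is to reduce the whole statement to a single orthogonality observation about the vectors $S_\lambda \delta_v$. First I would compute these explicitly. Since $r$ is injective, for each $w \in r(E^1)$ the preimage $r^{-1}(w)$ consists of a single edge, so unwinding the definition of $\Lambda_E$ gives
\[ S_\lambda \delta_v = \sum_{w \in r(s^{-1}(v))} \lambda_w \delta_w, \]
where $r(s^{-1}(v)) \subseteq E^0$ is the set of vertices reached by an edge emitted from $v$. The forward implication is then immediate: if $S_\lambda$ is injective, then $S_\lambda \delta_v \neq 0$ because $\delta_v \neq 0$.

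The substance lies in the converse. The key structural fact, and the place where the tree hypotheses genuinely enter, is that the supports of the vectors $S_\lambda \delta_v$ are pairwise disjoint as $v$ ranges over $E^0$. Indeed, if some $w$ lay in both $r(s^{-1}(v_1))$ and $r(s^{-1}(v_2))$, there would be edges $e_1, e_2$ with $r(e_1) = r(e_2) = w$ and $s(e_i) = v_i$; injectivity of $r$ forces $e_1 = e_2$ and hence $v_1 = v_2$. Thus the sets $r(s^{-1}(v))$ are pairwise disjoint, so the vectors $S_\lambda \delta_v$ have disjoint supports with respect to the standard basis and are in particular mutually orthogonal.

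Given this, I would expand an arbitrary $f \in \ell^2(E^0)$ as $f = \sum_{v} f(v)\delta_v$ and use boundedness of $S_\lambda$ to write $S_\lambda f = \sum_v f(v)\, S_\lambda \delta_v$, a sum of pairwise orthogonal vectors. The Pythagorean identity then yields
\[ \|S_\lambda f\|^2 = \sum_{v \in E^0} |f(v)|^2 \, \|S_\lambda \delta_v\|^2. \]
Assuming $S_\lambda \delta_v \neq 0$ for every $v$, the equation $S_\lambda f = 0$ forces $|f(v)|^2 \|S_\lambda \delta_v\|^2 = 0$, hence $f(v) = 0$, for every $v$; so $f = 0$ and $S_\lambda$ is injective.

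The only genuine obstacle is the orthogonality step, which rests entirely on $r$ being injective --- the defining feature of a directed tree guaranteeing that each vertex has at most one parent. Once that disjointness of supports is in hand, the remainder is routine Hilbert-space bookkeeping, and I would expect the same norm identity $\|S_\lambda f\|^2 = \sum_v |f(v)|^2 \|S_\lambda \delta_v\|^2$ to be precisely what underlies the closed-range criterion stated in the preceding theorem as well.
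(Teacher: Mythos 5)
Your proof is correct. Note that the paper itself gives no proof of this statement: Section 4 explicitly states that all results there are quoted from the memoir of Jab\l{}o\'{n}ski, Jung, and Stochel \cite{WeightedShifts}, so there is nothing internal to compare against. Your argument is essentially the standard one from that source: injectivity of $r$ makes the children sets $r(s^{-1}(v))$ pairwise disjoint, which gives the orthogonal decomposition $S_\lambda f = \sum_v f(v) S_\lambda \delta_v$ and the identity $\|S_\lambda f\|^2 = \sum_{v} |f(v)|^2 \|S_\lambda \delta_v\|^2$, from which both directions follow at once. Your closing observation is also accurate: this same identity is what drives the closed-range criterion and the lower-bound estimate in Corollary \ref{cor:BoundedBelow}, since it shows $S_\lambda$ acts diagonally (in norm) over the basis vectors.
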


\begin{corollary}\label{cor:BoundedBelow}
Let $S_{\lambda}$ be a bounded weighted shift on a directed tree $E$.  Then $S_{\lambda}$ is bounded below if and only if
\[ \varepsilon := \inf \{ \| S_{\lambda} \delta_v \| : v \in E^0 \} > 0. \]
In this case, $\|S_{\lambda} f\| \geq \varepsilon \|f\|$ for every $f \in \ell^2(E^0)$.
\end{corollary}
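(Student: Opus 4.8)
The plan is to reduce the entire statement to one structural observation: the vectors $\{S_{\lambda} \delta_v : v \in E^0\}$ are pairwise orthogonal, and this is precisely where the directed-tree hypothesis is used. Unwinding the definition of $S_{\lambda}$ with $f = \delta_v$, one finds that $(S_{\lambda}\delta_v)(w) = \lambda_w\, \delta_v(s(r^{-1}(w)))$ for $w \in r(E^1)$ and $0$ otherwise, so $S_{\lambda}\delta_v$ is supported on $r(s^{-1}(v))$ with $(S_{\lambda}\delta_v)(w) = \lambda_w$ there. Now suppose $w \in r(s^{-1}(v)) \cap r(s^{-1}(v'))$ with $v \neq v'$; then there are edges $e \in s^{-1}(v)$ and $e' \in s^{-1}(v')$ with $r(e) = w = r(e')$, and since $r$ is injective on a directed tree, $e = e'$, forcing $v = s(e) = s(e') = v'$, a contradiction. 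Hence the supports $r(s^{-1}(v))$ are pairwise disjoint and the $S_{\lambda}\delta_v$ are mutually orthogonal.

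Given this orthogonality, I would expand an arbitrary $f \in \ell^2(E^0)$ as $f = \sum_{v} f(v)\delta_v$. Because $S_{\lambda}$ is bounded by hypothesis, $S_{\lambda}f = \sum_v f(v)\, S_{\lambda}\delta_v$, and the orthogonality yields the Pythagorean identity
\[ \|S_{\lambda}f\|^2 = \sum_{v \in E^0} |f(v)|^2\, \|S_{\lambda}\delta_v\|^2. \]
The right-hand side converges since $\|S_{\lambda}\delta_v\| \leq \|S_{\lambda}\|$ for every $v$, so the identity holds on all of $\ell^2(E^0)$ (it is immediate on finitely supported vectors and extends by continuity of $S_{\lambda}$).

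Both directions, together with the quantitative estimate, now fall out of this identity. If $\varepsilon > 0$, then every term satisfies $\|S_{\lambda}\delta_v\|^2 \geq \varepsilon^2$, whence $\|S_{\lambda}f\|^2 \geq \varepsilon^2 \sum_v |f(v)|^2 = \varepsilon^2 \|f\|^2$; this simultaneously shows $S_{\lambda}$ is bounded below and gives the asserted bound $\|S_{\lambda}f\| \geq \varepsilon\|f\|$. Conversely, if $S_{\lambda}$ is bounded below, say $\|S_{\lambda}f\| \geq c\|f\|$ for some $c > 0$ and all $f$, then applying this to $f = \delta_v$ gives $\|S_{\lambda}\delta_v\| \geq c$ for every $v$, so $\varepsilon \geq c > 0$.

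The only real content is the orthogonality established in the first step; everything after it is a routine Pythagorean computation plus a density argument. I expect the orthogonality to be the main point precisely because it is what fails for general graphs: without injectivity of $r$ the supports $r(s^{-1}(v))$ can overlap, the clean identity above breaks down, and one loses the explicit constant. As a remark, one could instead derive the equivalence from the two preceding theorems, since an operator is bounded below exactly when it is injective with closed range, and those criteria force $S_{\lambda}\delta_v \neq 0$ for all $v$ and upgrade the infimum over $\{v : S_{\lambda}\delta_v \neq 0\}$ to a positive infimum over all of $E^0$; however, that route does not directly produce the constant $\varepsilon$, so I prefer the orthogonality argument.
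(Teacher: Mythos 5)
Your proof is correct, and the key step is exactly where you put it: injectivity of $r$ (the defining feature of a directed tree) forces the child sets $r(s^{-1}(v))$ to be pairwise disjoint, so the vectors $S_{\lambda}\delta_v$ are mutually orthogonal, and everything else is the Pythagorean identity plus continuity of the bounded operator $S_{\lambda}$. Note, however, that the paper itself supplies no proof of this statement: all results in that section are quoted from the Jab{\l}o\'{n}ski--Jung--Stochel memoir, and the statement is presented as a corollary of the two preceding theorems (the injectivity criterion and the closed-range criterion), via the standard fact that an operator is bounded below if and only if it is injective with closed range. Your route is genuinely different from that implied derivation and, in one respect, better matched to the statement as written: combining the two theorems shows that $\varepsilon > 0$ implies $S_{\lambda}$ is bounded below by \emph{some} positive constant, but it does not by itself yield the sharp final assertion $\|S_{\lambda}f\| \geq \varepsilon\|f\|$, whereas your identity $\|S_{\lambda}f\|^2 = \sum_{v}|f(v)|^2\,\|S_{\lambda}\delta_v\|^2$ delivers the equivalence and the exact constant in one stroke (and is essentially the computation underlying the cited memoir's treatment). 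Your closing remark correctly identifies this trade-off; the only thing to add is that the converse direction (bounded below implies $\varepsilon > 0$) never needs the tree structure at all, since it is just evaluation at the basis vectors $\delta_v$.
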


It turns out that the Fredholm theory for the weighted shifts $S_{\lambda}$ is closely related to the combinatorial structure of the underlying graph.  The following theorem gives a simple formula for the index of a Fredholm weighted shift.  This result will be very important for understanding finite topological graph algebras since it gives an easy way of constructing left-invertible operators which are not right-invertible.

\begin{theorem}
Let $S_{\lambda}$ be a bounded weighted shift on a directed tree $E$.  Define
$X = \{ v \in E^0 : S_{\lambda} \delta_v \neq 0 \}$.  Then
\begin{enumerate}
  \item $\dim (\ker S_{\lambda}) = \left| E^0 \setminus X \right|$, and
  \item $\displaystyle \dim (\ker S_{\lambda}^*) = \sum_{v \in X} (|s^{-1}(v)| - 1) + \sum_{v \in E^0 \setminus X} |s^{-1}(v)| + |E^0 \setminus r(E^0)|$.
\end{enumerate}
\end{theorem}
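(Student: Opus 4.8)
The plan is to exploit the fact that a weighted shift on a directed tree breaks $\ell^2(E^0)$ into an orthogonal direct sum of blocks indexed by the emitting vertices, on each of which $S_\lambda$ acts transparently. First I would record the explicit action on the basis. Unwinding the definition of $\Lambda_E$ and using that $r$ is injective (so $r^{-1}(v)$ is a single edge for $v \in r(E^1)$), one obtains
\[ S_\lambda \delta_u = \sum_{e \in s^{-1}(u)} \lambda_{r(e)}\, \delta_{r(e)}, \]
so that $S_\lambda\delta_u$ is supported on $r(s^{-1}(u))$ with $\|S_\lambda\delta_u\|^2 = \sum_{v \in r(s^{-1}(u))}|\lambda_v|^2$. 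The crucial structural observation, again using injectivity of $r$, is that these supports are pairwise disjoint: if $v \in r(s^{-1}(u)) \cap r(s^{-1}(u'))$ then $v = r(e) = r(e')$ forces $e = e'$ and hence $u = s(e) = u'$. Consequently the sets $r(s^{-1}(u))$ partition $r(E^1)$ as $u$ ranges over the emitting vertices, and the vectors $\{S_\lambda\delta_u : u \in X\}$ are pairwise orthogonal. Since $r$ restricts to a bijection $s^{-1}(u) \to r(s^{-1}(u))$, I also record the cardinality identity $|r(s^{-1}(u))| = |s^{-1}(u)|$, which I will use repeatedly.

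For part (1), orthogonality of the $S_\lambda\delta_u$ shows that for any $f = \sum_u f(u)\delta_u$ we have $S_\lambda f = \sum_u f(u)\, S_\lambda \delta_u$ with orthogonal summands, so $S_\lambda f = 0$ if and only if $f(u) = 0$ for every $u \in X$. Hence $\ker S_\lambda = \overline{\operatorname{span}}\{\delta_u : u \notin X\} = \ell^2(E^0 \setminus X)$, which has dimension $|E^0\setminus X|$.

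For part (2) I would compute $\dim\ker S_\lambda^* = \dim(\overline{\operatorname{ran}}\, S_\lambda)^\perp$ block by block. Because $S_\lambda$ is bounded and $\operatorname{span}\{\delta_u\}$ is dense, $\overline{\operatorname{ran}}\, S_\lambda$ equals the closed span of $\{S_\lambda\delta_u : u \in X\}$, which lies inside $\ell^2(r(E^1))$. I then decompose $\ell^2(E^0)$ as the orthogonal direct sum of $\ell^2(E^0\setminus r(E^1))$ and the blocks $\ell^2(r(s^{-1}(u)))$ over emitting vertices $u$, and observe that taking orthogonal complements respects this decomposition, since $\overline{\operatorname{ran}}\, S_\lambda$ meets each block $\ell^2(r(s^{-1}(u)))$ exactly in $\mathbb{C}\cdot S_\lambda\delta_u$ (one-dimensional when $u \in X$, zero when $u$ emits but $u \notin X$) and meets $\ell^2(E^0 \setminus r(E^1))$ in $0$. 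Counting dimensions then contributes $|E^0 \setminus r(E^1)|$ from the first summand, $|s^{-1}(u)| - 1$ from each block with $u \in X$, and $|s^{-1}(u)|$ from each block with $u$ emitting and $u \notin X$. Since $|s^{-1}(u)| = 0$ for every non-emitting $u$, the last sum extends over all of $E^0\setminus X$ without change, yielding exactly the claimed formula (reading the $r(E^0)$ in the statement as $r(E^1)$).

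The routine steps are the basis computation and the dimension count; the delicate point is the passage to the closed range. I expect the main obstacle to be justifying rigorously that the orthogonal complement of $\overline{\operatorname{ran}}\, S_\lambda$ splits along the block decomposition, so that no ``cross-block'' vectors slip into $\ker S_\lambda^*$, together with the bookkeeping of cardinalities when $X$, its complement, or the fibers $s^{-1}(u)$ are infinite. Both are controlled by the pairwise disjointness of supports established at the outset, which reduces every infinite-dimensional subtlety to an orthogonal direct sum of elementary one- or finite-dimensional computations.
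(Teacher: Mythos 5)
Your proof is correct. Note that the paper itself contains no proof of this statement: Section 4 explicitly quotes all of its results from the memoir of Jab{\l}o\'{n}ski--Jung--Stochel \cite{WeightedShifts}, so there is no in-paper argument to compare against; your argument, however, is exactly the standard one, and it matches the route taken in that reference. The key structural facts you isolate --- that $S_\lambda \delta_u$ is supported on $r(s^{-1}(u))$, that injectivity of $r$ makes these supports pairwise disjoint and makes them partition $r(E^1)$ together with $E^0 \setminus r(E^1)$, and that orthogonal complements split along the resulting block decomposition --- are precisely what drives the kernel and cokernel computations, and your handling of the infinite-cardinality cases (reading $\kappa - 1 = \kappa$ for infinite $\kappa$) is the right convention. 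You are also right that the $r(E^0)$ appearing in the statement is a typo for $r(E^1)$, since $r$ is a map $E^1 \rightarrow E^0$; the formula is correct as you read it.
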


\begin{corollary}\label{cor:SurjectiveShift}
Suppose $S_{\lambda}$ is a bounded weighted shift on $E$ that is bounded below.  Then $S_{\lambda}$ is surjective if and only if $r: E^1 \rightarrow E^0$ is surjective and $s:E^1 \rightarrow E^0$ is injective.
\end{corollary}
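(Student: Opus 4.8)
The plan is to reduce the surjectivity of $S_\lambda$ to the vanishing of $\dim(\ker S_\lambda^*)$ and then read off the answer from the index formula in the preceding theorem. First I would record the standard consequences of the hypothesis that $S_\lambda$ is bounded below: the estimate $\|S_\lambda f\| \geq \varepsilon \|f\|$ from Corollary \ref{cor:BoundedBelow} shows at once that $S_\lambda$ is injective and, via a Cauchy sequence argument, that it has closed range. Since the range is closed, $\operatorname{ran}(S_\lambda) = (\ker S_\lambda^*)^\perp$, so $S_\lambda$ is surjective if and only if $\ker S_\lambda^* = 0$, i.e.\ $\dim(\ker S_\lambda^*) = 0$. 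This is the quantity the preceding theorem computes.

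Next I would use the bounded-below hypothesis to simplify the set $X = \{ v \in E^0 : S_\lambda \delta_v \neq 0 \}$. Corollary \ref{cor:BoundedBelow} gives $\|S_\lambda \delta_v\| \geq \varepsilon > 0$ for every $v$, so $S_\lambda \delta_v \neq 0$ for all $v$ and hence $X = E^0$. In particular there are no sinks: if $s^{-1}(v) = \emptyset$ then $S_\lambda \delta_v = 0$, which is impossible, so $|s^{-1}(v)| \geq 1$ for every $v$. Feeding $X = E^0$ into the formula of the preceding theorem collapses the middle sum and leaves
\[ \dim(\ker S_\lambda^*) = \sum_{v \in E^0} \bigl( |s^{-1}(v)| - 1 \bigr) + |E^0 \setminus r(E^1)|. \]
Because there are no sinks, each summand $|s^{-1}(v)| - 1$ is non-negative, and the final term is visibly non-negative, so the whole expression vanishes if and only if both pieces vanish term by term.

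Finally I would translate these two vanishing conditions into the stated combinatorial ones. The last term is zero exactly when $r(E^1) = E^0$, that is, $r$ is surjective; and the first sum is zero exactly when $|s^{-1}(v)| = 1$ for every $v \in E^0$. The main point to get right is the bookkeeping that turns the latter into the asserted injectivity of $s$: a priori $|s^{-1}(v)| = 1$ for all $v$ says $s$ is a bijection, which is formally stronger than injectivity. The resolution is the no-sinks observation above—bounded below already forces $s$ to be surjective, so under this hypothesis the conditions ``$s$ injective'', ``$|s^{-1}(v)| \leq 1$ for all $v$'', and ``$|s^{-1}(v)| = 1$ for all $v$'' are all equivalent. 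Assembling the two equivalences then yields that $S_\lambda$ is surjective precisely when $r$ is surjective and $s$ is injective. The only genuine obstacle is this reconciliation step; everything else is a direct substitution into the index formula, so I expect the write-up to be short.
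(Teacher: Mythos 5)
Your proof is correct and is essentially the intended derivation: the paper states this corollary without its own written proof (it is quoted, like the rest of the section, from \cite{WeightedShifts}), and the evident route---boundedness below gives injectivity, closed range, and $X = E^0$, after which the index formula collapses to $\dim(\ker S_\lambda^*) = \sum_{v \in E^0}\bigl(|s^{-1}(v)|-1\bigr) + |E^0 \setminus r(E^1)|$ with all terms non-negative---is exactly what you carried out. Your reconciliation of ``$|s^{-1}(v)| = 1$ for all $v$'' with the formally weaker ``$s$ injective,'' via the observation that boundedness below already rules out sinks, is the one genuinely non-mechanical point, and you handled it correctly.
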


\section{Representations on Directed Trees}\label{sec:Representations}

Our goal in this section is to build a family of representations of a topological graph on directed trees as defined in the previous section.  Roughly, given an infinite path $\alpha$ in $E$, we will define the ``orbit'' of $\alpha$ to be a discrete directed tree $\Gamma_\alpha$ inside the infinite path space $E^\infty$. The elements of $C_0(E^0)$ and $X(E)$ will be represented, respectively, as diagonal operators and weighted shifts on the Hilbert space $\ell^2(\Gamma_\alpha)$. The construction is a slight modification of the representations built by Katsura (see Remark \ref{remark:KatsuraConstruction} below).

Let $E$ be a topological graph and set
\[ E^\infty := \{ \alpha: \mathbb{N} \rightarrow E^1 : s(\alpha(n)) = r(\alpha(n+1)) \text{ for every } n \in \mathbb{N} \}, \]
and given $\alpha \in E^\infty$, define $r(\alpha) = r(\alpha(1))$.  An element $\alpha$ of $E^\infty$ is thought of as an infinite path as shown below
\[ \begin{tikzcd} \cdots \arrow{r}{\alpha(3)} & \bullet \arrow{r}{\alpha(2)} & \bullet \arrow{r}{\alpha(1)} & r(\alpha) \end{tikzcd} \]
in $E$.  Given a path $\beta \in E^*$ with $s(\beta) = r(\alpha)$, we define $\beta\alpha \in E^\infty$ by
\[ (\beta\alpha)(n) = \begin{cases} \beta(n) & n \leq |\beta| \\ \alpha(n - |\beta|) & n > |\beta|. \end{cases} \]
We think of $\beta\alpha$ as being the path given by ``shifting forward'' by $\beta$.  There is also a notion of ``backward shift'' given by $\sigma: E^\infty \rightarrow E^\infty$, $\sigma(\alpha)(n) = \alpha(n+1)$.

For each $\alpha \in E^\infty$ and $n \in \mathbb{Z}$, define
\[ \Gamma_\alpha^n = \{ \beta \sigma^k(\alpha) : k \in \mathbb{N},\, \beta \in E^{n+k},\, s(\beta) = r(\alpha(k+1)) \} \]
Set $\Gamma_\alpha = \coprod_{n \in \mathbb{Z}} \Gamma_\alpha^n$.  Note that for each $\alpha \in E^\infty$ and $n \in \mathbb{Z}$, $\sigma(\Gamma_\alpha^n) \subseteq \Gamma_\alpha^{n-1}$.  The elements of $\Gamma_\alpha$ are all the infinite paths obtained by taking shifts of $\alpha$.  An element of $\Gamma_\alpha^n$ is thought of as an infinite path of ``length'' $|\alpha| + n$.

The set $\Gamma_\alpha$ can be thought of as a directed tree $F$ where $F^0 = F^1 = \Gamma_\alpha$, $r = \mathrm{id}$, and $s = \sigma$.

\begin{remark}\label{remark:KatsuraConstruction}
In \cite[Section 12]{KatsuraTGA3} Katsura used a similar construction to study the primitive ideals in $C^*(E)$.  In particular, given $\alpha \in E^\infty$, define
\[ \operatorname{Orb}(\alpha) = \{ \beta \sigma^k(\alpha) : k \in \mathbb{N},\, \beta \in E^*,\, s(\beta) = r(\alpha(k+1)) \} \subseteq E^\infty. \]
As with $\Gamma_\alpha$, we may think of $\operatorname{Orb}(\alpha)$ as a (discrete) directed graph.  When $\alpha$ is periodic in the sense that there is a loop $\beta \in E^*$ and a path $\gamma \in E^*$ with $\alpha = \gamma \beta^\infty$, then $\operatorname{Orb}(\alpha)$ will have a loop and hence is distinct from $\Gamma_\alpha$.  When $\alpha$ is aperiodic, there is no difference between $\operatorname{Orb}(\alpha)$ and $\Gamma_\alpha$.
\end{remark}

\begin{example}
If $e \in E^1$ is a loop; i.e.\ $s(e) = r(e)$, then we may form an infinite path $\alpha = e^\infty = e e e \cdots$.  Then in $E^\infty$, $\sigma(\alpha) = \alpha$, and hence $\sigma(\alpha) = \alpha$ in $\operatorname{Orb}(\alpha)$.  However in $\Gamma_\alpha$, $\sigma(\alpha)$ and $\alpha$ are viewed as different elements of $\Gamma_\alpha$.  In particular, $\alpha \in \Gamma_\alpha^0$ and $\sigma(\alpha) \in \Gamma_\alpha^{-1}$.  Similarly $e\alpha$ and $\alpha$ are considered to be different elements in $\Gamma_\alpha$ since $e\alpha \in \Gamma_\alpha^1$.
\end{example}

\begin{example}
Let $E$ be the graph shown below.
\[ \begin{tikzcd} v \arrow[loop left]{}{e} \arrow[bend left]{r}{f} \arrow[bend right]{r}[swap]{g} & w \end{tikzcd} \]
Then $E^\infty = \{e^\infty, f e^\infty, g e^\infty\}$.  The graph $\Gamma_{e^\infty}$ is given by
\begin{center}
\begin{tikzcd}
\cdots & (f e^\infty, -1) & (f e^\infty, 0) & (f e^\infty, 1) & \cdots \\
\cdots \arrow{r} \arrow{dr} \arrow{ur} & (e^\infty, -1) \arrow{r} \arrow{dr} \arrow{ur} & (e^\infty, 0) \arrow{r} \arrow{dr} \arrow{ur} & (e^\infty, 1) \arrow{r} \arrow{dr} \arrow{ur} & \cdots \\
\cdots & (g e^\infty, -1) & (g e^\infty, 0) & (g e^\infty, 1) & \cdots
\end{tikzcd}
\end{center}
In particular, $\Gamma_{e^\infty}^n = \{(e^\infty, n), (fe^\infty, n), (ge^\infty, n)\}$.  On the other hand, the graph $\operatorname{Orb}(e^\infty)$ is given by
\[ \begin{tikzcd}[column sep = small] & & f e^\infty \\ e^\infty \arrow[loop left]{} \arrow{rru} \arrow{rrd}& & \\ & & g e^\infty \end{tikzcd} \]
\end{example}

For $\alpha \in E^\infty$, define $H_\alpha = \ell^2(\Gamma_\alpha)$ and $H_\alpha^n = \ell^2(\Gamma_\alpha^n)$.  Then $H_\alpha = \bigoplus_{n \in \mathbb{Z}} H_\alpha^n$ and hence $H_\alpha$ may be viewed as a $\mathbb{Z}$-graded Hilbert space.  The $\mathbb{Z}$-grading on $H_\alpha$, induces a natural gauge action on $\mathbb{B}(H_\alpha)$ which we now describe.  For each $z \in \mathbb{T}$, define $U_{\alpha, z} \in \mathbb{B}(H_\alpha)$ by $U_{\alpha, z} h = z^n h$ for each $h \in H_\alpha^n$.  Now set $\gamma_{\alpha, z} = \operatorname{Ad}(U_{\alpha, z}) \in \operatorname{Aut} \mathbb{B}(H_\alpha)$.  It is easy to verify $\gamma_\alpha: \mathbb{T} \rightarrow \operatorname{Aut} \mathbb{B}(H_\alpha)$ is a point-norm continuous group homomorphism.

We will build a gauge invariant representation $\psi_\alpha : C^*(E) \rightarrow \mathbb{B}(H_\alpha)$ for each $\alpha \in E^\infty$.  Fix $\alpha \in E^\infty$.  Define maps $\pi_\alpha: C_0(E^0) \rightarrow \mathbb{B}(H_\alpha)$ and $\tau_\alpha:C_c(E^1) \rightarrow \mathbb{B}(H_\alpha)$ by
\begin{equation*}
\pi_\alpha(a)\delta_\beta = a(r(\beta)) \delta_\beta, \qquad \text{and} \qquad \tau_\alpha(\xi)\delta_\beta = \sum_{e \in s^{-1}(r(\beta))} \xi(e) \delta_{e\beta}.
\end{equation*}
for every $a \in C_0(E^0)$, $\xi \in X(E)$, and $\beta \in \Gamma_\alpha$.

Although our orbit space $\Gamma_\alpha$ is slightly different than Katusra's orbit space $\operatorname{Orb}(\alpha)$ (see Remark \ref{remark:KatsuraConstruction}), the definition of $(\pi_\alpha, \tau_\alpha)$ and the proof that this is a covariant Toeplitz representation is identical to Katusra's proof in \cite[Section 12]{KatsuraTGA3}.  For the readers convenience we outline a proof.

A routine (but slightly tedious) argument shows that $(\pi_\alpha, \tau_\alpha)$ is a Toeplitz representation.  Let $\varphi_\alpha: \mathbb{K}(X(E)) \rightarrow \mathbb{B}(H_\alpha)$ be the *-homomorphism given by $\varphi_\alpha(\theta_{\xi, \eta}) = \tau_\alpha(\xi)\tau_\alpha(\eta)^*$.  To show $(\pi_\alpha, \tau_\alpha)$ is covariant, we must show $\pi_\alpha(a) = \varphi_\alpha(\lambda(a))$ whenever $a \in C_c(E^0_{reg})$.

Suppose $a \in C_c(E^0_{reg})$.  Since $\operatorname{supp}(a)$ is a compact subset of $E^0_{reg}$, the set $K = r^{-1}(\operatorname{supp}(a)) \subseteq E^1$ is compact.  Choose $e_1, \ldots, e_m \in K$ and relatively compact open sets $U_1, \ldots, U_m \subseteq K$ such that $e_i \in U_i$, $s|_{U_i}$ is a homeomorphism, and $K \subseteq U_1 \cup U_2 \cup \cdots \cup U_m$.  Choose continuous functions $\zeta_i \in C_c(E^1)$ such that $0 \leq \zeta_i \leq 1$, $\operatorname{supp} \zeta_i \subseteq U_i$, and $\zeta_1(e) + \cdots \zeta_m(e) = 1$ for each $e \in K$.

For $i = 1, \ldots, m$, set $\xi_i = (a \circ r) \zeta_i^{1/2}$ and $\eta_i = \zeta_i^{1/2}$.  Note that for $e \in E^1$ and $\zeta \in C_c(E^1)$,
\begin{align*}
   \sum_{i=1}^m \theta_{\xi_i, \eta_i}(\zeta)(e)
&= \sum_{i=1}^m \xi_i(e) \sum_{f \in s^{-1}(s(e))} \overline{\eta_i(f)} \zeta(f)
 = \sum_{i=1}^m \xi_i(e) \overline{\eta_i(e)} \zeta(e) \\
&= \sum_{i=1}^m a(r(e)) \zeta_i(e) \zeta(e)
 = a(r(e))\zeta(e)
 = \lambda(a)(\zeta)(e).
\end{align*}
Hence $\lambda(a) = \sum_{i=1}^m \theta_{\xi_i, \eta_i}$.  Now,
\begin{align*}
   \varphi_\alpha(\lambda(a)) \delta_\beta
&= \sum_{i=1}^m \tau_\alpha(\xi_i)\tau_\alpha(\eta_i)^* \delta_\beta
 = \sum_{i=1}^m \tau_\alpha(\xi_i) \overline{\eta_i(\beta(1))} \delta_{\sigma(\beta)} \\
&= \sum_{i=1}^m \sum_{e \in s^{-1}(r(\sigma(\beta)))} \xi_i(e) \overline{\eta_i(\beta(1))} \delta_{e\sigma(\beta)} \\
&= \sum_{i=1}^m \xi_i(\beta(1)) \overline{\eta_i(\beta(1))} \delta_{\beta} \\
&= a(r(\beta(1))) \delta_\beta
 = a(r(\beta)) \delta_\beta
 = \pi_\alpha(a) \delta_\beta.
\end{align*}
Hence the Toeplitz representation $(\pi_\alpha, \tau_\alpha)$ is covariant.  Let $\psi_\alpha: C^*(E) \rightarrow \mathbb{B}(H_\alpha)$ be the associated representation.

Since $(\pi_\alpha, \tau_\alpha)$ is covariant representation, there is an associated representation \mbox{$\psi_\alpha: C^*(E) \rightarrow \mathbb{B}(H_\alpha)$}.

If $\alpha \in E^*$ with $s(\alpha) \in E^0_{sng}$, with the obvious modifications we can build $\psi_\alpha: C^*(E) \rightarrow \mathbb{B}(H_\alpha)$ as before; in this case, $H_\alpha^{-n} = 0$ for $n > |\alpha|$.  For the details see \cite[Section 12]{KatsuraTGA3}.  Let $E^{\leq \infty} = E^\infty \cup \{ \alpha \in E^* : s(\alpha) \in E^0_{sng} \}$. Define
\[ H^n = \bigoplus_{\alpha \in E^{^{\leq \infty}}} H_\alpha^n, \qquad \text{and} \qquad H = \bigoplus_{\alpha \in E^{^{\leq \infty}}} H_\alpha = \bigoplus_{n \in \mathbb{Z}} H^n. \]
Then we have a representation of $C^*(E)$ on $\mathbb{B}(H)$ given by
\[ \psi = \bigoplus_{\alpha \in E^{^{\leq \infty}}} \psi_\alpha : C^*(E) \rightarrow \mathbb{B}(H). \]

We claim $\psi$ is injective.  Note that the map \, $\bigoplus_\alpha \pi_\alpha : C_0(E^0) \rightarrow C^*(E)$ is injective since for every $v \in E^0$, there is an $\alpha \in E^{\leq \infty}$ such that $r(\alpha) = v$.  Moreover, $\psi_\alpha$ is gauge invariant for each $\alpha \in E^\infty$ since for $a \in C_0(E^0)$, $\pi_\alpha(a)(H_\alpha^n) \subseteq H_\alpha^n$ and for $\xi \in X(E)$, $\tau_\alpha(\xi)(H_\alpha^n) \subseteq H_\alpha^{n+1}$.  Hence $\psi$ is injective by the Gauge Invariant Uniqueness Theorem (Theorem \ref{thm:GIUT}).

\section{Finiteness}\label{sec:Finiteness}

\begin{theorem}\label{thm:Finiteness}
Suppose $E = (E^0, E^1, r, s)$ is a compact topological graph with no sinks; i.e.\ $E^0$ and $E^1$ are compact and $s$ is surjective.  If $C^*(E)$ is finite, then $s$ is a homeomorphism and $r$ is surjective.
\end{theorem}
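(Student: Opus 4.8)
The plan is to use finiteness in its operator-theoretic form: since $E^0$ is compact, $C(E^0)$ and hence $C^*(E)$ is unital, and a unital $C^*$-algebra is finite exactly when every isometry in it is a unitary. I will produce a single explicit isometry $\tau(\xi)\in C^*(E)$ and read off \emph{both} conclusions from the requirement that $\tau(\xi)$ be a unitary, by pushing it through the representations $\psi_\alpha$ of Section~\ref{sec:Representations} and applying Corollary~\ref{cor:SurjectiveShift}.

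First I would construct $\xi\in C_c(E^1)=C(E^1)$ with $\langle\xi,\xi\rangle$ identically equal to $1$, so that $\tau(\xi)^*\tau(\xi)=\pi(\langle\xi,\xi\rangle)=1$ and $\tau(\xi)$ is an isometry. The key input is that the fibre-counting function $N(v):=|s^{-1}(v)|$ is \emph{locally constant}: given $v$, one covers its finite fibre by disjoint open sets on which $s$ is a homeomorphism onto an open subset of $E^0$, and then uses compactness of $E^1$ (so that edges outside this cover have range map image closed and missing $v$) to find a neighbourhood of $v$ over which the fibre cardinality is constant. Because $E$ has no sinks, $N\ge 1$, so $\xi:=(N\circ s)^{-1/2}$ is a well-defined continuous function on $E^1$ with $\sum_{e\in s^{-1}(v)}|\xi(e)|^2=N(v)/N(v)=1$.

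Now assume $C^*(E)$ is finite, so $\tau(\xi)$ is a unitary; since $\psi=\bigoplus_\alpha\psi_\alpha$ is a unital injective $*$-homomorphism and a block-diagonal operator is unitary iff each block is, every $\psi_\alpha(\tau(\xi))=\tau_\alpha(\xi)$ is a unitary on $H_\alpha$. A direct computation identifies $\tau_\alpha(\xi)$ with the weighted shift $S_\lambda$ on the directed tree $\Gamma_\alpha$ (weights $\lambda_\gamma=\xi(\gamma(1))$), and $\langle\xi,\xi\rangle\equiv 1$ forces $\|\tau_\alpha(\xi)\delta_\gamma\|=1$, so $S_\lambda$ is bounded below. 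I then split on the type of $\alpha$. If a finite path $\alpha$ ending at a singular vertex exists, its tree $\Gamma_\alpha$ is truncated below, so the tree's range map $r=\mathrm{id}$ omits the bottom level $\Gamma_\alpha^{-|\alpha|}$ and is not surjective; by Corollary~\ref{cor:SurjectiveShift} the bounded-below shift $S_\lambda$ cannot be surjective, contradicting unitarity. Hence there are no such finite paths, i.e.\ $E^0_{sng}=\emptyset$; since $E^1$ is compact, $r(E^1)$ is closed, so $E^0_{reg}=\operatorname{int}r(E^1)=E^0$ gives $r(E^1)=E^0$, and $r$ is surjective.

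Finally, for $\alpha\in E^\infty$ the tree $\Gamma_\alpha$ has no sources, so Corollary~\ref{cor:SurjectiveShift} says surjectivity of the bounded-below shift $S_\lambda$ forces the tree's source map $\sigma$ to be injective. Unwinding the definition of $\Gamma_\alpha$ gives $\sigma^{-1}(\gamma)=\{e\gamma:e\in s^{-1}(r(\gamma))\}$, whence $|\sigma^{-1}(\gamma)|=|s^{-1}(r(\gamma))|$ and injectivity of $\sigma$ means $|s^{-1}(r(\gamma))|\le 1$ for every $\gamma\in\Gamma_\alpha$. Applying this at $\gamma=\alpha$ and using that every $v\in E^0$ equals $r(\alpha)$ for some $\alpha\in E^{\le\infty}=E^\infty$ (as noted after the construction of $\psi$) yields $|s^{-1}(v)|\le 1$ for all $v$; with the no-sinks hypothesis this gives $|s^{-1}(v)|=1$, so $s$ is a continuous bijection of the compact space $E^1$ onto the Hausdorff space $E^0$, hence a homeomorphism. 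I expect the main obstacle to be the bookkeeping that identifies $\tau_\alpha(\xi)$ with a weighted shift and correctly matches the tree's combinatorial maps ($r=\mathrm{id}$, $s=\sigma$, together with the truncation for finite paths) to the graph maps $r,s$, so that Corollary~\ref{cor:SurjectiveShift} applies cleanly in both the infinite- and finite-path cases.
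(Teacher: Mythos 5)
Your proposal is correct and takes essentially the same route as the paper: both represent $C^*(E)$ on the directed trees $\Gamma_\alpha$, use finiteness to force the resulting weighted shifts to be surjective, and then apply Corollary \ref{cor:SurjectiveShift} to rule out truncated trees at singular vertices (giving surjectivity of $r$) and to get injectivity of the tree's shift map $\sigma$ (giving injectivity of $s$). The only deviation is cosmetic: the paper takes $T=\tau(1)$, which is merely bounded below, and invokes finiteness via left-invertibility implying right-invertibility, whereas you normalize the weights to $\xi=(N\circ s)^{-1/2}$ (justified by the local constancy of the fibre count $N$, which does hold when $E^1$ is compact) so that finiteness can be applied directly to an honest isometry.
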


\begin{proof}
Since $E^1$ is compact, $1 \in C(E^1)$.  With the notation from the previous section, define $T_\alpha = \tau_{\alpha}(1) \in \mathbb{B}(\ell^2(H_\alpha))$ for $\alpha \in E^{\leq \infty}$ and define $T = \bigoplus_{\alpha} T_\alpha \in \mathbb{B}(H)$.  Recall that $\psi = \bigoplus_{\alpha} \psi_\alpha$ is a faithful representation of $C^*(E)$.  Identifying $C^*(E)$ with the range of $\psi$, we have $C^*(E) \subseteq \mathbb{B}(H)$ and $T \in C^*(E)$.

Fix $\alpha \in E^{\leq \infty}$.  Then $\Gamma_\alpha$ can be viewed as a directed tree $(F^0, F^1, r, s)$ by setting $F^0 = F^1 = \Gamma_\alpha$, $r = \mathrm{id}$, and $s = \sigma$.  Then $T_\alpha$ is a weighted shift on $\Gamma_\alpha$ with constant weights 1.  For each $\beta \in \Gamma_\alpha$,
\[ \|T_\alpha \delta_\beta\| = \left\|\sum_{e \in s^{-1}(r(\beta))} \delta_{e\beta}\right\| = |s^{-1}(r(\beta))|^{1/2} \geq 1 \]
since $E$ has no sinks.  Therefore, by Corollary \ref{cor:BoundedBelow}, $T_\alpha$ is bounded below by 1 and hence there is an $S_\alpha \in \mathbb{B}(\ell^2(\Gamma_\alpha))$ with $S_\alpha T_\alpha = 1$ and $\| S_\alpha \| \leq 1$.

Set $S = \bigoplus_\alpha S_\alpha \in \mathbb{B}(H)$.  Then $S T = 1$ and $T$ is left-invertible.  Since $C^*(E)$ is finite, we also have $T$ is right invertible.  Therefore, $T_\alpha S_\alpha = 1$ and $T_\alpha$ is surjective for every $\alpha \in E^{\leq \infty}$.  If $v \in E^0$ is a source, then $v \in \Gamma_v$ is also a source.  But this contradicts Corollary \ref{cor:SurjectiveShift} since $T_\alpha$ is surjective. Therefore, $r$ is surjective.

Similarly, if $e, f \in E^1$ with $s(e) = s(f)$, then choose $\alpha \in E^{\leq \infty}$ with $r(\alpha) = s(e)$.  Now, $e\alpha, f\alpha \in \Gamma_\alpha$ and $\sigma(e\alpha) = \sigma(f\alpha)$.  Since $T_\alpha$ is surjective, $\sigma|_{\Gamma_\alpha}$ is injective by Corollary \ref{cor:SurjectiveShift}.  Hence $e\alpha = f\alpha$ and $e = f$.  Thus $s$ is injective.
\end{proof}

\begin{remark}
The converse of Theorem \ref{thm:Finiteness} fails.  Take $E^0 = E^1 = \mathbb{Z} \cup \{\pm \infty\}$ with the usual topology and define $r(n) = n$ and $s(n) = n + 1$ for each $n \in \mathbb{Z} \cup \{\pm \infty\}$.  Then $r$ and $s$ are both homeomorphisms but $C^*(E)$ is infinite as can be seen from Theorem \ref{thm:Pimsner} or Theorem \ref{thm:TopGraphsAFEmbedding} below.
\end{remark}

We will show that when $C^*(E)$ is finite, $C^*(E) \cong C(E^\infty) \rtimes_{\sigma} \mathbb{Z}$.  First we establish a simple lemma.

\begin{lemma}\label{lem:ProjectiveLimit}
Suppose $E$ is a compact topological graph with no sinks.  If $s$ is injective and $r$ is surjective, then the maps $\rho_n: E^\infty \rightarrow E^0$ given by $\rho_n(\alpha) = r(\alpha(n))$ induce a homeomorphism $\rho: E^\infty \rightarrow \underset{\longleftarrow}{\lim}\,( E^0, r \circ s^{-1})$.
\end{lemma}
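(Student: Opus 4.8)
The plan is to first check that the hypotheses make the bonding map of the inverse system a genuine continuous self-map of $E^0$, then to exhibit $\rho$ explicitly and verify it is a continuous bijection, and finally to invoke compactness to upgrade this to a homeomorphism. The point is that, once $s$ is known to be invertible, everything reduces to index bookkeeping.

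First I would observe that $s: E^1 \rightarrow E^0$ is a continuous bijection: it is injective by hypothesis and surjective since $E$ has no sinks, and both $E^0$ and $E^1$ are compact Hausdorff. Hence $s$ is a homeomorphism, $s^{-1}: E^0 \rightarrow E^1$ is continuous, and $f := r \circ s^{-1}: E^0 \rightarrow E^0$ is a well-defined continuous map, so $\underset{\longleftarrow}{\lim}\,(E^0, f)$ makes sense as a closed (hence compact Hausdorff) subspace of $\prod_n E^0$. Likewise $E^\infty$ is the closed subspace of the compact space $\prod_n E^1$ cut out by the conditions $s(\alpha(n)) = r(\alpha(n+1))$, so $E^\infty$ is compact Hausdorff as well.

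Next I would verify that $\rho = (\rho_n)_n$ lands in the inverse limit, the key computation being
\[ f(\rho_{n+1}(\alpha)) = r\bigl(s^{-1}(r(\alpha(n+1)))\bigr) = r\bigl(s^{-1}(s(\alpha(n)))\bigr) = r(\alpha(n)) = \rho_n(\alpha), \]
where the middle equalities use the path relation $r(\alpha(n+1)) = s(\alpha(n))$ together with the injectivity of $s$. Since each $\rho_n$ is the composition of the continuous evaluation $\alpha \mapsto \alpha(n)$ with $r$, the map $\rho$ is continuous. For injectivity, if $\rho(\alpha) = \rho(\beta)$ then $r(\alpha(n)) = r(\beta(n))$ for all $n$, whence $s(\alpha(n)) = r(\alpha(n+1)) = r(\beta(n+1)) = s(\beta(n))$ and so $\alpha(n) = \beta(n)$ by injectivity of $s$. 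For surjectivity, given a compatible sequence $(v_n)_n$ with $v_n = f(v_{n+1})$, I would set $\alpha(n) = s^{-1}(v_{n+1})$ and check directly that $\alpha \in E^\infty$ and that $\rho_n(\alpha) = r(s^{-1}(v_{n+1})) = f(v_{n+1}) = v_n$. A continuous bijection between compact Hausdorff spaces is a homeomorphism, which finishes the argument.

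The only real subtlety — and the step most prone to error — is the index bookkeeping: one must match the bonding map $r \circ s^{-1}$ against the path relation $r(\alpha(n+1)) = s(\alpha(n))$ with the correct shift in $n$, and it is precisely at these points that injectivity of $s$ (and hence invertibility of $s$) is used, both to make $f$ well defined and to recover $\alpha$ from its images under the $\rho_n$. The topological upgrade from bijection to homeomorphism is then automatic from the compactness established in the first step.
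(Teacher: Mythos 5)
Your proposal is correct and follows essentially the same route as the paper: both proofs check the compatibility relation $r \circ s^{-1} \circ \rho_{n+1} = \rho_n$, establish injectivity from injectivity of $s$ via the path relation $s(\alpha(n)) = r(\alpha(n+1))$, and then upgrade a continuous bijection of compact Hausdorff spaces to a homeomorphism.

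The one place you diverge is the surjectivity step, and there your argument is actually the more complete one. The paper simply asserts that since each coordinate map $\rho_n$ is surjective, $\rho$ is surjective; as stated this is not a valid implication in general (surjectivity onto each factor of an inverse system does not by itself put a given compatible sequence in the image), and making it rigorous requires either a finite-intersection-property argument using the nested nonempty closed sets $\rho_n^{-1}(v_n)$, or exactly what you did: given a compatible sequence $(v_n)$ with $v_n = r(s^{-1}(v_{n+1}))$, define $\alpha(n) = s^{-1}(v_{n+1})$ and verify directly that $\alpha \in E^\infty$ and $\rho(\alpha) = (v_n)$. Your explicit construction exploits the standing hypothesis that $s$ is invertible (which the paper's vaguer argument does not need, but which is available here), and it also dovetails with your preliminary observation---absent from the paper---that $s$, being a continuous bijection of compact Hausdorff spaces, is a homeomorphism, so that $r \circ s^{-1}$ is genuinely continuous and the inverse limit is well defined. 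So your write-up is both faithful to the paper's strategy and slightly tighter at its weakest point.
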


\begin{proof}
It is clear that $r \circ s^{-1} \circ \rho_{n+1} = \rho_n$ for every $n$ and hence $\rho$ is well-defined.  Since $\rho_n$ is surjective for every $n$, $\rho$ is also surjective.  Since $E^\infty$ is compact and Hausdorff, it suffices to show $\rho$ is injective. But if $\rho(\alpha) = \rho(\beta)$ for every $\alpha = \beta$, then $\rho_n(\alpha) = \rho_n(\beta)$ for every $n$; i.e.\ $r(\alpha(n)) = r(\beta(n))$ for every $n$.  Now,
\[ \alpha(n) = s^{-1}(r(\alpha(n+1))) = s^{-1}(r(\beta(n+1))) = \beta(n) \]
for every $n$ and $\alpha = \beta$.
\end{proof}

\begin{theorem}\label{thm:InfinitePathCrossedProduct}
Suppose $E$ is a compact topological graph with no sinks.  If $s$ is injective and $r$ is surjective (e.g.\ if\, $C^*(E)$ is finite), then $\sigma: E^\infty \rightarrow E^\infty$ is a homeomorphism and there is a natural isomorphism $C^*(E) \rightarrow C(E^\infty) \rtimes_\sigma \mathbb{Z}$.
\end{theorem}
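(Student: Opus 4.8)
The plan is to prove the two assertions in turn: first that $\sigma$ is a homeomorphism, then that $C^*(E)$ is isomorphic to $B := C(E^\infty) \rtimes_\sigma \mathbb{Z}$ by producing a covariant Toeplitz representation of $X(E)$ in $B$ and applying the Gauge Invariant Uniqueness Theorem (Theorem \ref{thm:GIUT}). For the first claim, I would note that since $E$ has no sinks $s$ is surjective, and since $s$ is injective by hypothesis and $E^1$ is compact with $E^0$ Hausdorff, $s$ is a homeomorphism of $E^1$ onto $E^0$. Then $\sigma$ is a continuous bijection of $E^\infty$: injectivity follows because $s(\alpha(1)) = r(\alpha(2))$ and injectivity of $s$ recover $\alpha(1)$ from $\sigma(\alpha)$, and surjectivity because any $\beta$ extends backward by the unique edge $s^{-1}(r(\beta(1)))$. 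As $E^\infty$ is compact and Hausdorff (already used in Lemma \ref{lem:ProjectiveLimit}), a continuous bijection is automatically a homeomorphism. Equivalently, under the identification of Lemma \ref{lem:ProjectiveLimit}, $\sigma$ is the shift on the inverse limit $\underset{\longleftarrow}{\lim}\,(E^0, r \circ s^{-1})$, which is always invertible.

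For the isomorphism, write $u \in B$ for the canonical unitary, so $u f u^* = f \circ \sigma^{-1}$ for $f \in C(E^\infty)$. Because $s$ is a bijection, $s^{-1}(v)$ is a single edge for each $v$, so the module norm on $X(E)$ agrees with the sup norm, giving $X(E) = C(E^1)$ with $\langle \xi, \eta \rangle(v) = \overline{\xi(s^{-1}(v))}\,\eta(s^{-1}(v))$. I would define $\pi \colon C(E^0) \to B$ by $\pi(a) = a \circ r_\infty$, where $r_\infty(\alpha) = r(\alpha(1))$, and $\tau \colon X(E) \to B$ by $\tau(\xi) = (\xi \circ \mathrm{ev}_1)\, u^*$, where $\mathrm{ev}_1(\alpha) = \alpha(1)$. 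Using the identity $s^{-1}(r(\alpha(1))) = \mathrm{ev}_1(\sigma^{-1}\alpha)$, a direct computation yields $\tau(a\xi) = \pi(a)\tau(\xi)$ and $\tau(\xi)^* \tau(\eta) = (\overline{\xi}\,\eta \circ \mathrm{ev}_1)\circ \sigma^{-1} = \pi(\langle \xi, \eta \rangle)$, so $(\pi, \tau)$ is a Toeplitz representation, and $\pi$ is injective since $r_\infty$ is surjective by Lemma \ref{lem:ProjectiveLimit}.

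Next I would verify covariance. Here $\theta_{\xi,\eta}$ is multiplication by $\xi\overline{\eta}$, so $\mathbb{K}(X(E))$ is the algebra of multiplication operators by $C(E^1)$ and $\lambda(a)$ is multiplication by $a \circ r$; since $E$ is compact with $r$ surjective, $r$ is a proper surjection, every vertex is regular, and $J_{X(E)} = C(E^0)$. From $\tau(\xi)\tau(\eta)^* = (\xi\overline{\eta}) \circ \mathrm{ev}_1$ I get that $\varphi$ sends multiplication by $\phi$ to $\phi \circ \mathrm{ev}_1$, whence $\varphi(\lambda(a)) = (a \circ r)\circ \mathrm{ev}_1 = a \circ r_\infty = \pi(a)$ for every $a$; thus $(\pi,\tau)$ is covariant. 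The dual $\mathbb{T}$-action on $B$ (reparametrized by $z \mapsto \overline{z}$) fixes $\pi(C(E^0))$ and scales $\tau$ by $z$, so by Theorem \ref{thm:GIUT} the induced map $\Psi \colon C^*(E) \to B$ is injective.

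Finally, for surjectivity, note $\tau(1) = u^*$ (the constant function $1$), so $u \in \operatorname{im}\Psi$, while $\tau(\xi)\tau(\eta)^* = (\xi\overline{\eta}) \circ \mathrm{ev}_1$ shows $\operatorname{im}\Psi$ contains every function of the first coordinate. Conjugating by $u^k$ (via $u^* f u = f \circ \sigma$) carries $\phi \circ \mathrm{ev}_1$ to $\phi \circ \mathrm{ev}_{k+1}$, so $\operatorname{im}\Psi$ contains all $\phi \circ \mathrm{ev}_n$. These separate the points of $E^\infty$ and include the constants, so by Stone--Weierstrass they generate $C(E^\infty)$; together with $u$ this is all of $B$, giving $\Psi$ onto. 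I expect the main work to lie in (i) pinning down the covariance identity $\pi(a) = \varphi(\lambda(a))$ through the explicit description of $\mathbb{K}(X(E))$, and (ii) the surjectivity step, whose key idea is that conjugation by the shift $u$ sweeps the first-coordinate functions across \emph{all} coordinates, so that Stone--Weierstrass recovers every continuous function on $E^\infty$.
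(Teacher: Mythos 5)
Your proposal is correct and follows essentially the same route as the paper: the same covariant pair $\pi(a) = a \circ r_\infty$, $\tau(\xi) = (\xi \circ \mathrm{ev}_1)u^*$ (your $u^*$ is the paper's $u$, a purely notational swap), injectivity via the Gauge Invariant Uniqueness Theorem, and surjectivity by conjugating coordinate functions by powers of the shift unitary. The only cosmetic differences are that you spell out the covariance computation the paper dismisses as ``tedious but straightforward'' (via the clean identification of $\mathbb{K}(X(E))$ with multiplication operators when $s$ is bijective), and you run Stone--Weierstrass on the edge coordinates $\phi \circ \mathrm{ev}_n$ rather than the vertex coordinates $a \circ \rho_n$ from Lemma \ref{lem:ProjectiveLimit}.
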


\begin{proof}
Since $r$ is injective and $s$ is surjective, $\sigma$ is a homeomorphism.  Let $u$ be a unitary in $C(E^\infty) \rtimes_\sigma \mathbb{Z}$ such that $uau^* = a \circ \sigma$ for each $a \in C(E^\infty)$.  Define $\pi:C(E^0) \rightarrow C(E^\infty) \rtimes_\sigma \mathbb{Z}$ and $\tau:C(E^1) \rightarrow C(E^\infty) \rtimes_\sigma \mathbb{Z}$ by $\pi(a) = a \circ r$ and $\tau(\xi) = (\xi \circ \operatorname{ev}_1) u$ for $a \in C(E^0)$ and $\xi \in C(E^1)$, where $\operatorname{ev}_1:E^\infty \rightarrow E^1$ is the evaluation map $\alpha \mapsto \alpha(1)$.

A tedious but straight forward computation show $(\pi, \tau)$ is a covariant representation and hence induces a morphism $\psi: C^*(E) \rightarrow C(E^\infty) \rtimes_\sigma \mathbb{Z}$.  By the Gauge Invariance Uniqueness Theorem, $\psi$ is injective.  It remains to show surjectivity.  Note that $\tau(1) = u$.  Hence, with the notation of Lemma \ref{lem:ProjectiveLimit}, it suffices to show $a \circ \rho_n$ is in the range of $\psi$ for every $a \in C(E^0)$ and $n \geq 1$.  But
\[ a \circ \rho_n = a \circ r \circ \sigma^{n-1} = u^{n-1} (a \circ r) u^{1-n} = \tau(1)^{n-1} \pi(a) \tau(1)^{1-n}. \]
Hence $\psi$ is surjective.
\end{proof}

Using Theorem \ref{thm:InfinitePathCrossedProduct} together with Theorem \ref{thm:Pimsner}), we will be able to characterize the AF-embeddability of $C^*(E)$ for compact graphs $E$ with no sinks.  First we need a simple lemma regarding pseduoperiodic points (see condition (5) of Theorem \ref{thm:Pimsner} for the definition).

\begin{lemma}\label{lem:PseudoperiodicPoints}
Suppose $X$ is a compact metric space and $f: X \rightarrow X$ is continuous and surjective.  Define $\widehat{X} = \underset{\longleftarrow}{\lim}\, (X, f)$ and let $\widehat{f}$ denote the homeomorphism of $\widehat{X}$ induced by $f$.  Then every point of $(X, f)$ is pseudo\-periodic if and only if every point of $(\widehat{X}, \widehat{f})$ is pseudoperiodic.
\end{lemma}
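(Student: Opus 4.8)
The plan is to recognize that pseudoperiodicity is precisely Conley's notion of chain recurrence, so the lemma asserts that the property ``the whole space is chain recurrent'' passes between $(X,f)$ and its natural extension $(\widehat{X}, \widehat{f})$. Since chain recurrence of a point is a topological property (two compatible metrics on a compact space are uniformly equivalent, hence have uniformly equivalent $\varepsilon$-chains), I am free to equip $\widehat{X} = \{(y_0, y_1, y_2, \ldots) : f(y_{k+1}) = y_k\}$ with the convenient inverse limit metric $\widehat{d}(\hat{y}, \hat{z}) = \sum_{k \geq 0} 2^{-k} d(y_k, z_k)$, after normalizing so that $d \leq 1$. Write $p_k : \widehat{X} \to X$ for the $k$-th coordinate projection; each $p_k$ is continuous, surjective (using that $f$ is surjective, so that preimages can always be chosen), and intertwines the dynamics: $p_k \circ \widehat{f} = f \circ p_k$. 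Throughout I use that $\widehat{f}(y_0, y_1, \ldots) = (f(y_0), y_0, y_1, \ldots)$ and that any $\hat{x} = (x_0, x_1, \ldots) \in \widehat{X}$ satisfies $x_k = f^{N-k}(x_N)$ for all $k \leq N$.

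The easy direction is ``upstairs $\Rightarrow$ downstairs.'' Given $x \in X$ and $\varepsilon > 0$, I lift $x$ to some $\hat{x} \in \widehat{X}$ with $p_0(\hat{x}) = x$ (possible since $p_0$ is surjective). By hypothesis $\hat{x}$ is pseudoperiodic, so for this $\varepsilon$ there is an $\varepsilon$-chain $\hat{x} = \hat{y}_1, \hat{y}_2, \ldots, \hat{y}_n$ (closing back to $\hat{x}$). Because the $k=0$ term of $\widehat{d}$ has weight $1$, the projection $p_0$ is non-expansive, i.e.\ $d(p_0(\hat{u}), p_0(\hat{v})) \leq \widehat{d}(\hat{u}, \hat{v})$. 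Combining this with $p_0 \circ \widehat{f} = f \circ p_0$ gives $d(f(p_0(\hat{y}_i)), p_0(\hat{y}_{i+1})) = d(p_0(\widehat{f}(\hat{y}_i)), p_0(\hat{y}_{i+1})) \leq \widehat{d}(\widehat{f}(\hat{y}_i), \hat{y}_{i+1}) < \varepsilon$, so $p_0(\hat{y}_1) = x, p_0(\hat{y}_2), \ldots, p_0(\hat{y}_n)$ is an $\varepsilon$-chain from $x$ to $x$, and $x$ is pseudoperiodic.

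The substantive direction is ``downstairs $\Rightarrow$ upstairs.'' Fix $\hat{x} = (x_0, x_1, \ldots) \in \widehat{X}$ and $\varepsilon > 0$. First choose $N$ with $2^{-N} < \varepsilon/2$, so that any two points of $\widehat{X}$ agreeing to within $\eta := \varepsilon/4$ in coordinates $0, \ldots, N$ are $\widehat{d}$-closer than $\varepsilon$ (the tail contributes less than $\varepsilon/2$ and the head less than $2\eta < \varepsilon/2$). Using uniform continuity of the finite family $f^0, f^1, \ldots, f^N$ on the compact space $X$, pick $\delta > 0$ so that $d(u,v) < \delta$ forces $d(f^p(u), f^p(v)) < \eta$ for all $p = 0, \ldots, N$. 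Now invoke pseudoperiodicity of the point $x_N$ downstairs: there is a $\delta$-chain $x_N = a_0, a_1, \ldots, a_m = x_N$ with $d(f(a_j), a_{j+1}) < \delta$. Lift each $a_j$ to the ``staircase'' point $\hat{b}_j \in \widehat{X}$ defined by $(\hat{b}_j)_k = f^{N-k}(a_j)$ for $0 \leq k \leq N$, with the coordinates beyond $N$ completed arbitrarily (possible by surjectivity of $f$). The point of this lift is that applying $\widehat{f}$ shifts by one level, so for $0 \leq k \leq N$ one computes $(\widehat{f}(\hat{b}_j))_k = f^{N-k+1}(a_j) = f^{N-k}(f(a_j))$, while $(\hat{b}_{j+1})_k = f^{N-k}(a_{j+1})$; since $d(f(a_j), a_{j+1}) < \delta$ and $N-k \leq N$, these differ by less than $\eta$. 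Hence $\widehat{d}(\widehat{f}(\hat{b}_j), \hat{b}_{j+1}) < \varepsilon$, so $\hat{b}_0, \ldots, \hat{b}_m$ is an $\varepsilon$-chain upstairs. Finally, because $a_0 = a_m = x_N$, the endpoints satisfy $(\hat{b}_0)_k = (\hat{b}_m)_k = f^{N-k}(x_N) = x_k$ for $k \leq N$; that is, $\hat{b}_0$ and $\hat{b}_m$ agree with $\hat{x}$ exactly on coordinates $0, \ldots, N$. Since all chain estimates only see coordinates up to level $N$ (within $\eta$), I may replace $\hat{b}_0$ by $\hat{x}$ at the start and close the loop back to $\hat{x}$ at the end, producing an $\varepsilon$-pseudoperiodic cycle based at $\hat{x}$.

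The main obstacle is exactly the mismatch that the inverse limit metric $\widehat{d}$ records all coordinates simultaneously, whereas a single application of $\widehat{f}$ propagates information by only one level; naively pushing a level-$N$ chain forward would take $N$ steps to reach coordinate $0$ and entangle with the downstairs chain in an uncontrolled way. The staircase lift $(\hat{b}_j)_k = f^{N-k}(a_j)$ is what resolves this: it converts the one-level shift of $\widehat{f}$ into, coordinate by coordinate, an application of a bounded power $f^{N-k}$ with $0 \leq N-k \leq N$ to the single downstairs approximation $f(a_j) \approx a_{j+1}$, and the errors are then controlled uniformly by the equicontinuity of the finite family $\{f^0, \ldots, f^N\}$. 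The bookkeeping to pass from the nearly-closed chain $\hat{b}_0, \ldots, \hat{b}_m$ to a genuine pseudoperiodic cycle anchored at $\hat{x}$ is routine once one observes that the endpoints already agree with $\hat{x}$ on the only coordinates the metric estimate depends on.
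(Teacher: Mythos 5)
Your proof is correct and follows essentially the same route as the paper's: the easy direction by projecting a chain to the first coordinate, and the substantive direction by lifting a $\delta$-chain at coordinate $N$ (with $2^{-N}$ small) via the staircase lift, controlling errors with uniform continuity of the iterates $f^0,\ldots,f^N$ --- the paper packages this control as a nested sequence $\delta_1 = \varepsilon > \delta_2 > \cdots$ with $d(x,y)<\delta_{n+1} \Rightarrow d(f(x),f(y))<\delta_n$, which is equivalent to your equicontinuity of the finite family. Your explicit treatment of re-anchoring the endpoints of the lifted chain at $\hat{x}$ fills in a step the paper dismisses as ``easy to see,'' but the underlying argument is the same.
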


\begin{proof}
Write
\[ \widehat{X} = \left\{(x_n)_{n=1}^\infty \in \prod_{n=1}^\infty X : x_n = f(x_{n+1}) \text{ for every } n \geq 1 \right\}. \]
Then $\widehat{f}: \widehat{X} \rightarrow \widehat{X}$ is given by $\widehat{f}((x_n)_n) = (f(x_n))_n$.  Let $d$ be any fixed metric on $X$ with $d(x, y) \leq 1$ for every $x, y \in X$.  Then $\widehat{X}$ is a metric space with metric
\[ \widehat{d}((x_n), (y_n)) = \sum_{n=1}^\infty 2^{-n} d(x_n, y_n). \]

Suppose first every point of $(\widehat{X}, \widehat{f})$ is pseduoperiodic.  Fix $x_1 \in X$ and $\varepsilon > 0$.  Since $f$ is surjective, there is an $\widehat{x}_1 \in \widehat{X}$ with $\widehat{x}_1(1) = x_1$.  Choose $\widehat{x}_2, \widehat{x}_3, \ldots, \widehat{x}_n$ such that $\widehat{d}(\widehat{f}(\widehat{x}_i), \widehat{x}_{i+1}) < \varepsilon$ for $i = 1, \ldots, n$.  Define $x_i = \widehat{x}_i(1)$ for $i = 2, \ldots, n$.  Then
\[ d(f(x_i), x_{i+1}) = d(\widehat{f}(\widehat{x}_i)(1), \widehat{x}_{i+1}(1)) \leq 2 \widehat{d}(\widehat{f}(\widehat{x}_i), \widehat{x}_{i+1}) < 2 \varepsilon. \]
Hence $x_1$ is pseudoperiodic.

Now suppose every point of $(X, f)$ is pseudoperiodic.  Fix $\widehat{x}_1 \in \widehat{X}$ and $\varepsilon > 0$.  By the uniform continuity of $f$, we may find a sequence \mbox{$\delta_1 = \varepsilon > \delta_2 > \delta_3 > \cdots > 0$} such that $d(f(x), f(y)) < \delta_n$ whenever $x, y \in X$ with $d(x, y) < \delta_{n+1}$.

Choose $N \geq 1$ such that $2^{-N} < \varepsilon$.  Since $y_1 := \widehat{x}_1(N)$ is pseduoperiodic point for $(X, f)$, there are points $y_2, \ldots y_n \in \widehat{X}$ such that $d(y_i, y_{i+1}) < \delta_N$ for each $i = 1, \ldots, n$.  Since $f$ is surjective, there are points $\widehat{x}_2, \widehat{x}_3, \ldots, \widehat{x}_n \in \widehat{X}$ such that $\widehat{x}_i(N) = y_i$.  Now it is to easy see $d(\widehat{x}_i, \widehat{x}_{i+1}) < 2\varepsilon$ for each $i = 1, \ldots, n$ and hence $\widehat{x}_1$ is pseudoperiodic.
\end{proof}

The next definition will give our ``combinatorial'' characterization of finiteness for topological graph algebras.  It can be thought of as a graph theoretic version of pseduoperiodic points.

\begin{definition}
Suppose $E$ is a topological graph and $\varepsilon > 0$.  Let $d$ be a metric on $E^0$ compatible with the topology.  An \emph{$\varepsilon$-pseudopath} in $E$ is a finite sequence $\alpha = (e_n, \ldots, e_1)$ in $E^1$ such that for each $i = 1, \ldots, n - 1$, $d(r(e_i), s(e_{i+1})) < \varepsilon$.  We write $s(\alpha) = s(e_1)$ and $r(\alpha) = r(e_n)$.  An $\varepsilon$-pseduopath $\alpha$ is called an \emph{$\varepsilon$-pseduoloop based at $s(\alpha)$} if $d(r(\alpha), s(\alpha)) < \varepsilon$.
\end{definition}

\begin{theorem}\label{thm:TopGraphsAFEmbedding}
If $E$ is a compact graph with no sinks, then the following are equivalent:
\begin{enumerate}
  \item $C^*(E)$ is AF-embeddable;
  \item $C^*(E)$ is quasidiagonal;
  \item $C^*(E)$ is stably finite;
  \item $C^*(E)$ is finite;
  \item $s$ is injective and for every $v \in E^0$ and $\varepsilon > 0$, there is an $\varepsilon$-pseudoloop in $E$ based at $v$.
\end{enumerate}
\end{theorem}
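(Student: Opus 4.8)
The plan is to prove the cycle $(1)\Rightarrow(2)\Rightarrow(3)\Rightarrow(4)\Rightarrow(5)\Rightarrow(1)$. The implications $(1)\Rightarrow(2)\Rightarrow(3)\Rightarrow(4)$ hold for every separable $C^*$-algebra and were recalled in the introduction (AF-embeddable $\Rightarrow$ quasidiagonal $\Rightarrow$ stably finite $\Rightarrow$ finite), so the real content is $(4)\Rightarrow(5)$ and $(5)\Rightarrow(1)$. Both will be routed through the isomorphism $C^*(E)\cong C(E^\infty)\rtimes_\sigma\mathbb{Z}$ of Theorem \ref{thm:InfinitePathCrossedProduct} together with Pimsner's Theorem \ref{thm:Pimsner}, applied to the backward shift $\sigma$ on the compact metric space $E^\infty$.

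The heart of the argument is a dictionary between the combinatorial condition $(5)$ and the dynamical condition in Pimsner's theorem. Assuming $s$ is injective and $r$ is surjective, so that $s$ is a homeomorphism and $f:=r\circ s^{-1}\colon E^0\to E^0$ is a continuous surjection, I would first check that the pseudoloop condition on $E$ is equivalent to every point of $E^0$ being pseudoperiodic for $f$: if $v=s(e_1)$ and $(e_n,\dots,e_1)$ is an $\varepsilon$-pseudoloop based at $v$, then the vertices $v_i:=s(e_i)$ satisfy $f(v_i)=r(e_i)$ (here injectivity of $s$ is used), whence $d(f(v_i),v_{i+1})<\varepsilon$ cyclically, and this correspondence is reversible. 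Next, Lemma \ref{lem:ProjectiveLimit} identifies $E^\infty$ with $\varprojlim(E^0,f)$ and Lemma \ref{lem:PseudoperiodicPoints} transfers pseudoperiodicity of $f$ on $E^0$ to pseudoperiodicity of the induced homeomorphism $\widehat f$ on the inverse limit. The one wrinkle is that under the identification of Lemma \ref{lem:ProjectiveLimit} the shift $\sigma$ is conjugate to $\widehat f^{\,-1}$ rather than $\widehat f$; this is harmless because pseudoperiodicity of a homeomorphism $h$ agrees with that of $h^{-1}$ — reverse a pseudo-orbit and apply uniform continuity of $h^{-1}$, or equivalently note $C(X)\rtimes_h\mathbb{Z}\cong C(X)\rtimes_{h^{-1}}\mathbb{Z}$ and invoke Pimsner. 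Assembling these pieces gives: when $s$ is injective and $r$ is surjective, the pseudoloop condition on $E$ holds if and only if every point of $E^\infty$ is pseudoperiodic for $\sigma$.

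For $(4)\Rightarrow(5)$, finiteness of $C^*(E)$ gives, by Theorem \ref{thm:Finiteness}, that $s$ is a homeomorphism (in particular injective) and $r$ is surjective. Theorem \ref{thm:InfinitePathCrossedProduct} then identifies $C^*(E)$ with the finite algebra $C(E^\infty)\rtimes_\sigma\mathbb{Z}$, so Pimsner's implication $(4)\Rightarrow(5)$ shows every point of $E^\infty$ is pseudoperiodic for $\sigma$; the dictionary above converts this into the pseudoloop condition, and together with injectivity of $s$ this is precisely $(5)$.

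For $(5)\Rightarrow(1)$, condition $(5)$ already supplies injectivity of $s$, and I would first observe that it forces $r$ to be surjective: since $E^1$ is compact, $r(E^1)$ is closed, so a source $v$ would admit $\varepsilon>0$ with $d(v,r(e))\geq\varepsilon$ for every edge $e$, precluding any $\varepsilon$-pseudoloop based at $v$ and contradicting $(5)$. With $s$ injective and $r$ surjective in hand, Theorem \ref{thm:InfinitePathCrossedProduct} again yields $C^*(E)\cong C(E^\infty)\rtimes_\sigma\mathbb{Z}$, the dictionary turns $(5)$ into pseudoperiodicity of every point of $E^\infty$ for $\sigma$, and Pimsner's implication $(5)\Rightarrow(1)$ shows the crossed product, hence $C^*(E)$, is AF-embeddable. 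The main obstacle is the dictionary of the second paragraph: getting the combinatorial-to-dynamical translation exactly right, including the bookkeeping of the shift direction ($\sigma$ versus $\widehat f$) and the derivation of surjectivity of $r$ from the pseudoloop condition. Once this is set up, the four finiteness conditions collapse onto Pimsner's theorem with essentially no further work.
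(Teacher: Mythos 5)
Your proposal is correct and follows essentially the same route as the paper: the general implications $(1)\Rightarrow(2)\Rightarrow(3)\Rightarrow(4)$, then Theorem \ref{thm:Finiteness} and Theorem \ref{thm:InfinitePathCrossedProduct} to obtain $C^*(E)\cong C(E^\infty)\rtimes_\sigma\mathbb{Z}$, Pimsner's Theorem \ref{thm:Pimsner}, Lemmas \ref{lem:ProjectiveLimit} and \ref{lem:PseudoperiodicPoints}, and the identical dictionary between $\varepsilon$-pseudoloops in $E$ and pseudoperiodic points of $r\circ s^{-1}$ (including the same compactness argument that $(5)$ forces $r$ to be surjective). In fact you are slightly more careful than the paper, which applies Lemma \ref{lem:PseudoperiodicPoints} to $\sigma$ without remarking that under the identification of Lemma \ref{lem:ProjectiveLimit} the backward shift $\sigma$ is conjugate to $\widehat{f}^{\,-1}$ rather than $\widehat{f}$; your observation that pseudoperiodicity of every point is invariant under inverting the homeomorphism correctly closes that small gap.
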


\begin{proof}
The implications $(1) \Rightarrow (2) \Rightarrow (3) \Rightarrow (4)$ are true for arbitrary \mbox{$C^*$-algebras} \cite[Propositions 7.1.9, 7.1.10, and 7.1.15]{BrownOzawa}).  If $C^*(E)$ is finite, then by Theorem \ref{thm:InfinitePathCrossedProduct} $C^*(E) \cong C(E^\infty) \rtimes_\sigma \mathbb{Z}$.  By Theorem \ref{thm:Pimsner}, every point in $E^\infty$ is pseudoperiodic for $\sigma$ and hence every point of $E^0$ is pseduoperiodic for $r \circ s^{-1}$ by Lemmas \ref{lem:ProjectiveLimit} and \ref{lem:PseudoperiodicPoints}. Fix a metric $d$ on $E^0$.  Given $v \in E^0$ and $\varepsilon > 0$, choose $v_1 = v, v_2, \ldots, v_n \in E^0$ such that $d(r(s^{-1}(v_i)), v_{i+1}) < \varepsilon$ for every $i = 1, \ldots, n$.  Set $e_i = s^{-1}(v_i)$.  Then $(e_n, \ldots, e_2, e_1)$ is a pseduoloop in $E$ based at $v$.

Now suppose (5) holds.  Condition (5) implies $r$ has dense range and since $E^1$ is compact, $r$ is surjective.  Now, $C^*(E) \cong C(E^\infty) \rtimes_\sigma \mathbb{Z}$ by Theorem \ref{thm:InfinitePathCrossedProduct}.  Given $v \in E^0$ and $\varepsilon > 0$, choose an $\varepsilon$-pseduoloop $(e_n, \ldots, e_1)$ based at $v$.  Set $v_i = s(e_i)$ and note that $d(r(s^{-1}(v_i)), v_{i+1}) = d(r(e_i), s(e_{i+1})) < \varepsilon$. Hence by Theorem \ref{thm:Pimsner} and Lemma \ref{lem:PseudoperiodicPoints}, $C^*(E)$ is AF-embeddable.
\end{proof}

\begin{remark}
In Theorem \ref{thm:TopGraphsAFEmbedding}, the assumptions that $E$ is compact and has no sinks are necessary even when $E$ is discrete.  Consider the graphs below
\[ \begin{tikzcd} \bullet \arrow{r} \arrow[loop]{} & \bullet & & & \bullet \arrow{r} \arrow[loop]{} & \bullet \arrow{r} & \bullet \arrow{r} & \cdots \end{tikzcd} \]
In both cases, the $C^*$-algebra defined by the graph is finite but $s$ is not injective.
\end{remark}

\end{document}